\documentclass[11pt,reqno]{amsart}

\usepackage{amssymb, amsmath, amsthm}
\usepackage{hyperref}
\usepackage[alphabetic,lite]{amsrefs}
\usepackage{verbatim}
\usepackage{amscd}   
\usepackage[all]{xy} 
\usepackage{youngtab} 
\usepackage{young} 
\usepackage{tikz}
\usepackage{ mathrsfs }

\textheight8.2in
\setlength{\textwidth}{6.9in}
\setlength{\oddsidemargin}{-0.1in}
\setlength{\evensidemargin}{-0.1in}

\newcommand{\defi}[1]{{\upshape\sffamily #1}}

\renewcommand{\a}{\alpha}
\renewcommand{\b}{\beta}
\renewcommand{\c}{\gamma}
\newcommand{\bw}{\bigwedge}
\newcommand{\K}{\bb{K}}
\renewcommand{\ll}{\lambda}
\newcommand{\m}{\mathfrak{m}}

\newcommand{\oo}{\otimes}

\newcommand{\rank}{\textrm{rank}}

\newcommand{\Ext}{\operatorname{Ext}}
\newcommand{\GL}{\operatorname{GL}}
\newcommand{\Hom}{\operatorname{Hom}}

\newcommand{\SL}{\operatorname{SL}}

\newcommand{\Sym}{\operatorname{Sym}}
\newcommand{\Tor}{\operatorname{Tor}}

\newcommand{\bb}[1]{\mathbb{#1}}
\renewcommand{\rm}[1]{\textrm{#1}}
\newcommand{\mc}[1]{\mathcal{#1}}

\newcommand{\tl}[1]{\tilde{#1}}

\def\lra{\longrightarrow}

\newtheorem{theorem}{Theorem}[section]
\newtheorem*{theorem*}{Theorem}
\newtheorem{lemma}[theorem]{Lemma}

\newtheorem{corollary}[theorem]{Corollary}
\newtheorem*{corollary*}{Corollary}

\newtheorem*{covariants*}{Theorem on Covariants of the Special Linear Group}
\newtheorem*{minors*}{Theorem on Maximal Minors}
\newtheorem*{pfaffians*}{Theorem on sub--maximal Pfaffians}
\newtheorem*{ext*}{Theorem on Ext modules}

\theoremstyle{definition}

\newtheorem*{definition*}{Definition}
\newtheorem{example}[theorem]{Example}

\theoremstyle{remark}
\newtheorem{remark}[theorem]{Remark}
\newtheorem*{remark*}{Remark}

\numberwithin{equation}{section}



\begin{document}

\title{Local cohomology with support in ideals of maximal minors and sub--maximal Pfaffians}

\author{Claudiu Raicu}
\address{Department of Mathematics, Princeton University, Princeton, NJ 08544\newline
\indent Institute of Mathematics ``Simion Stoilow'' of the Romanian Academy}
\email{craicu@math.princeton.edu}

\author{Jerzy Weyman}
\address{Department of Mathematics, Northeastern University, Boston, MA 02115}
\email{j.weyman@neu.edu}

\author{Emily E. Witt}
\address{Department of Mathematics, University of Minnesota, Minneapolis, MN 55455}
\email{ewitt@umn.edu}

\subjclass[2010]{Primary 13D45, 14M12}

\date{\today}

\keywords{Covariants, local cohomology, maximal minors, Pfaffians}

\begin{abstract} We compute the $\GL$--equivariant description of the local cohomology modules with support in the ideal of maximal minors of a generic matrix, as well as of those with support in the ideal of $2n\times 2n$ Pfaffians of a $(2n+1)\times(2n+1)$ generic skew--symmetric matrix. As an application, we characterize the Cohen--Macaulay modules of covariants for the action of the special linear group $\SL(G)$ on $G^{\oplus m}$. The main tool we develop is a method for computing certain Ext modules based on the geometric technique for computing syzygies and on Matlis duality.
\end{abstract}

\maketitle

\section{Introduction}\label{sec:intro}

In this paper we present the $\GL$--equivariant description of the local cohomology modules of a polynomial ring $S$ with support in an ideal $I$ (denoted $H^j_I(S)$ for $j\geq 0$), in two cases of interest (throughout the paper, $\K$ will denote a field of characteristic zero):
\begin{itemize}
 \item $S$ is the ring of polynomial functions on the vector space of $m\times n$ matrices with entries in $\K$, and $I$ is the ideal of $S$ generated by the polynomial functions that compute the $n\times n$ minors.
 
 \item $S$ is the ring of polynomial functions on the vector space of $(2n+1)\times (2n+1)$ skew--symmetric matrices with entries in $\K$, and $I$ is the ideal generated by the polynomial functions that compute the $2n\times 2n$ Pfaffians.
\end{itemize}

One of the motivations behind our investigation is trying to understand the Cohen--Macaulayness of modules of covariants. This problem has a long history, originating in the work of Stanley \cite{stanley} on solution sets of linear Diophantine equations (see \cite{VdB:survey} for a survey, and also \cites{brion,VdB:CMcov,VdB:sl2,VdB:loccoh}). When $H$ is a reductive group and $W$ a finite dimensional $H$--representation, a celebrated theorem of Hochster and Roberts \cite{hochster-roberts} asserts that the ring of invariants $S^H$, with respect to the natural action of $H$ on the polynomial ring $S=\Sym(W)$, is Cohen--Macaulay. If $U$ is another finite dimensional $H$--representation, the associated \defi{module of covariants} is defined as $(S\oo U)^H$, and is a finitely generated $S^H$--module. In general it is quite rare that $(S\oo U)^H$ is Cohen--Macaulay, and our first result illustrates this in a special situation. 

\begin{covariants*}[{Theorem \ref{thm:covariants}}]
 Consider a finite dimensional $\K$--vector space $G$ of dimension $n$, an integer $m>n$, and let $H=\SL(G)$ be the special linear group, $W=G^{\oplus m}$, and $S=\Sym(W)$. If $U=S_{\mu}G$ is the irreducible $H$--representation associated to the partition $\mu=(\mu_1\geq\mu_2\geq\cdots\geq\mu_n=0)$ then $(S\oo U)^H$ is Cohen--Macaulay if and only if $\mu_s-\mu_{s+1}<m-n$ for all $s=1,\cdots,n-1$. 
\end{covariants*}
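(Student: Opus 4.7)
The plan is to translate the Cohen--Macaulay question for the module of covariants $N = (S \oo_\K U)^H$ over $S^H$ into a vanishing question for certain $\SL(G)$-isotypic components of the local cohomology modules $H^j_I(S)$, where $I \subset S$ is the ideal of maximal minors. Since $H = \SL(G)$ is reductive and $\chvar(\K) = 0$, the functor of $H$-invariants is exact and commutes with local cohomology. Moreover, by the first fundamental theorem of invariant theory, the irrelevant maximal ideal $\m \subset S^H$ is generated by the maximal minors, so $\m \cdot S = I$. Consequently, for every $j \geq 0$,
\[
 H^j_\m(N) \;\cong\; \bigl(H^j_I(S) \oo_\K U\bigr)^H \;=\; \Hom_H\bigl(U^{*},\, H^j_I(S)\bigr),
\]
and since $\dim N = n(m-n)+1$, the module $N$ is Cohen--Macaulay precisely when this vanishes for every $j \leq n(m-n)$, equivalently, when $H^j_I(S)$ contains no $\SL(G)$-isotypic component of type $(S_\mu G)^{*}$ for such $j$.

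Next I would invoke the Theorem on Maximal Minors, established earlier in the paper, which provides the $\GL(G) \times \GL(\K^m)$-equivariant decomposition of each $H^j_I(S)$ into Schur modules. An irreducible $\GL(G)$-summand $S_\ll G$ appearing in $H^j_I(S)$ matches $(S_\mu G)^{*}$ as an $\SL(G)$-representation precisely when $\ll = (k^n) + \mu^\vee$ for some $k \in \bb{Z}$, where
\[
 \mu^\vee \;:=\; \bigl(\mu_1,\; \mu_1 - \mu_{n-1},\; \mu_1 - \mu_{n-2},\; \ldots,\; \mu_1 - \mu_2,\; 0\bigr)
\]
represents the $\SL$-dual of $S_\mu G$, normalized with last part zero. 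The task therefore becomes: given $\mu$, identify the pairs $(k, j)$ with $1 \leq j \leq n(m-n)$ for which $(k^n) + \mu^\vee$ labels a nonzero summand of $H^j_I(S)$.

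A case analysis on $k$ (paralleling a Borel--Weil--Bott computation on the Grassmannian $\mathrm{Gr}(n,m)$) shows that $(k^n) + \mu^\vee$ can only contribute in the forbidden range $1 \leq j \leq n(m-n)$ when $k$ lies in the intermediate window $-m - \mu_1 < k < 0$, and then only if the associated dominant weight is Bott-regular; for $k \geq 0$ the corresponding $\GL(G)$-summand sits in $S$ itself (contributing to $N$ rather than to any higher local cohomology), while for $k \leq -m - \mu_1$ it sits only in the top degree $j = n(m-n) + 1$ (the permitted top local cohomology of $N$). Setting $a_p := \mu_p - \mu_1 - p$ for $1 \leq p \leq n$, the Bott-singular values of $k$ (those making the contribution vanish, hence harmless) form the union $\bigcup_{p=1}^{n} [a_p - (m-n) + 1,\; a_p]$ of $n$ consecutive-integer intervals of length $m-n$. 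Since $a_1 = -1$ and $a_n = -\mu_1 - n$, these intervals reach both endpoints of the intermediate window $[-m - \mu_1 + 1,\, -1]$, and the Cohen--Macaulay criterion reduces to the no-gap condition $a_p - a_{p+1} \leq m - n$ for $1 \leq p \leq n-1$. Since $a_p - a_{p+1} = \mu_p - \mu_{p+1} + 1$, this is exactly $\mu_s - \mu_{s+1} < m - n$ for every $s = 1, \ldots, n-1$, as claimed.

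The principal obstacle is the bookkeeping step: one must extract from the combinatorial description of $H^j_I(S)$ in the Theorem on Maximal Minors the precise cohomological degree $j$ (and multiplicity) in which each shifted partition $(k^n) + \mu^\vee$ appears, and reconcile this with the three $k$-regimes of the Bott picture sketched above. Once this dictionary is in place, the concluding interval-covering argument is elementary.
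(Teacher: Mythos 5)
Your proposal is correct and follows essentially the same strategy as the paper: reduce the Cohen--Macaulayness of $(S\oo U)^H$ to the non-appearance of the $\SL(G)$-type $U^*$ in $H^j_I(S)$ for $1 \leq s \leq n-1$ via the Van den Bergh identity $H^j_\m\bigl((S\oo U)^H\bigr) = \bigl(H^j_I(S)\oo U\bigr)^H$, and then apply the explicit decomposition from the Theorem on Maximal Minors. The only difference is cosmetic: where you phrase the combinatorial endgame as a covering of the intermediate $k$-window by the intervals $[a_p-(m-n)+1,\,a_p]$, the paper equivalently observes that $\ll = (k^n)+\mu^\vee$ lies in some $W(r;s)$ with $1\leq s\leq n-1$ exactly when $\mu_s - \mu_{s+1} \geq m-n$, exhibiting an explicit witness $\ll$ for the converse direction.
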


In the case when $m=n+1$, the Theorem on Covariants asserts that the only Cohen--Macaulay modules of covariants are direct sums of copies of $S^H$, which is remarked at the end of \cite{brion}. The case $n=3$ of the theorem is explained in \cite{VdB:loccoh}, while the case $n=2$ for an arbitrary $H$--representation $W$ is treated in \cite{VdB:sl2}. We restrict to $m>n$ in the statement of the theorem to avoid trivial cases: if $m<n$ then $S^H=\K$, while for $m=n$, $S^H=\K[\det]$ is a polynomial ring in one variable $\det$, corresponding to the determinant of the generic $n\times n$ matrix; in both cases, all the modules of covariants are Cohen--Macaulay.

We will prove the Theorem on Covariants by computing explicitly the local cohomology modules $H^j_I(S)$, where $I$ is the ideal generated by the maximal minors of a generic $m\times n$ matrix, and using the relationship between the local cohomology of the module $(S\oo U)^H$ and the invariants $(H^j_I(S)\oo U)^H$ \cite[Lemma~4.1]{VdB:tracerings}. The indices $j$ for which $H^j_I(S)\neq 0$ have been previously determined by the third author, as well as the description of the top non--vanishing local cohomology module $H^{n\cdot(m-n)+1}_I(S)$ as the injective hull of the residue field: see \cite{witt} for details, including some history behind the problem and its positive characteristic analogue. Writing $W=G^{\oplus m}=F\oo G$ for some $m$--dimensional vector space $F$, we note that the ideal $I$ (and hence the local cohomology modules $H^j_I(S)$) is preserved by the action of the product $\GL(F)\times\GL(G)$ of general linear groups. Our explicit description of the modules $H^j_I(S)$ exhibits their 
decomposition as a direct sum of irreducible representations of this group.

\begin{minors*}[{Theorem \ref{thm:loccohminors}}]
 Consider $\K$--vector spaces $F$ and $G$ of dimensions $m$ and $n$ respectively, with $m>n$, and integers $r\in\bb{Z}$, $j\geq 0$. For $1\leq s\leq n$ and $\ll=(\ll_1,\cdots,\ll_n)\in\bb{Z}^n$ a dominant weight (i.e. $\ll_1\geq\ll_2\geq\cdots\geq\ll_n$), let
\[\ll(s)=(\ll_1,\cdots,\ll_{n-s},\underbrace{-s,\cdots,-s}_{m-n},\ll_{n-s+1}+(m-n),\cdots,\ll_n+(m-n)).\]
We let $W(r;s)$ denote the set of dominant weights $\ll\in\bb{Z}^n$ with $\sum_{i=1}^n \ll_i=r$ and such that $\ll(s)\in\bb{Z}^m$ is also dominant. We identify $F^*\oo G^*$ with the vector space of $m\times n$ matrices, and write $S=\Sym(F\oo G)$ for the ring of polynomial functions on this space. We let $I$ denote the ideal generated by the (unique) irreducible $\GL(F)\times\GL(G)$--subrepresentation of $\Sym^n(F\oo G)$ isomorphic to $\bw^n F\oo\bw^n G$ (corresponding to the maximal minors of the generic $m\times n$ matrix). The degree $r$ part of the local cohomology module $H^j_I(S)$ decomposes as a direct sum of irreducible $\GL(F)\times\GL(G)$--representations as follows:
\[
H^j_I(S)_r=
\begin{cases}
\bigoplus_{\ll\in W(r;s)} S_{\ll(s)}F\oo S_{\ll}G, & \textrm{ if } j=s\cdot(m-n)+1,\textrm{ for some } s=1,\cdots,n; \\
0, & \textrm{otherwise}.  
\end{cases}
\]
\end{minors*}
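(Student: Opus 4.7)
My plan is to apply the paper's main tool (the Theorem on Ext modules) to a suitable system of approximations and then extract the answer in the limit.

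\textbf{Step 1: Reduce to Ext.} Start from the standard identification
\[ H^j_I(S) \;=\; \varinjlim_{t} \Ext^j_S(S/I^t, S), \]
so the problem becomes a $\GL(F)\times\GL(G)$-equivariant computation of each $\Ext^j_S(S/I^t,S)$, together with control of the transition maps as $t$ grows.

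\textbf{Step 2: Geometric resolution.} The zero locus of $I$ in $F^*\oo G^*$ is the variety $Y$ of matrices of rank $\leq n-1$, and it is desingularized by the total space $Z$ of a vector bundle on $\bb{P}(G)$ (the Kempf-style resolution $Z=\{(M,[v]) : Mv=0\}$). The Kempf--Lascoux--Weyman geometric technique, applied to $Z\to Y$ twisted by Schur functors of the tautological sub- and quotient bundles on $\bb{P}(G)$, yields equivariant free resolutions of the $S$-modules $I^t$ whose terms are controlled by the cohomology of Schur functor bundles on $\bb{P}(G)$.

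\textbf{Step 3: Compute Ext via Matlis duality.} Apply the paper's Theorem on Ext modules: it takes the resolution data from Step~2 and, via graded Matlis duality on $S$, returns an explicit equivariant description of $\Ext^j_S(S/I^t,S)$. Bott's algorithm on $\bb{P}(G)$ has a transparent chamber structure: for each $s\in\{1,\ldots,n\}$ there is a distinguished regime for the partition parameter in which the cohomology lives in degree $s(m-n)+1$, and the resulting $\GL(F)$-weight is produced by inserting a constant string $-s,\ldots,-s$ of length $m-n$ into the middle of the original weight.

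\textbf{Step 4: Pass to the limit.} As $t\to\infty$, the summands organize into the sets $W(r;s)$ of the statement. The inequalities $\ll_{n-s}\geq -s$ and $\ll_{n-s+1}\leq -(m-n+s)$ cutting out $W(r;s)$ are precisely the conditions that $\ll(s)\in\bb{Z}^m$ be dominant: the inserted weight is already Bott-regular, so no reordering occurs and the $\GL(F)$-representation produced is exactly $S_{\ll(s)}F$, while the $\GL(G)$-factor remains $S_{\ll}G$. Vanishing of $H^j_I(S)$ for $j\notin\{s(m-n)+1 : 1\leq s\leq n\}$ follows because in all other cohomological positions the relevant Bott computation yields zero.

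\textbf{Main obstacle.} The technical heart is Step~3: executing the Ext computation equivariantly for a sequence of modules of unbounded projective dimension, and ensuring that Matlis duality interacts cleanly with the Schur functor decomposition so that the output has the clean form tabulated by the weights $\ll(s)$. A secondary subtlety lies in Step~4, where one must verify that the transition maps in the direct limit stabilize and neither cancel surviving summands nor introduce extraneous ones, so that the full Ext system passes cleanly to the limit.
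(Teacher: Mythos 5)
Your overall strategy (direct limit over $\Ext^j_S(S/I^t,S)$, geometric technique plus the Ext-duality theorem, Bott's algorithm) is the same as the paper's, but your Step~2 chooses the wrong geometric model, and this is not a cosmetic difference. You propose the Kempf desingularization of the determinantal variety $Y=V(I)$: the incidence variety $Z=\{(M,[v]):Mv=0\}$, a vector bundle over $\bb{P}(G)$. That setup produces $S/I$ as $H^0$ of $\Sym(\eta)$ (and, twisted by line bundles on $\bb{P}(G)$, modules supported on $Y$), but it does \emph{not} give the ideals $I^t$ or the quotients $S/I^t$ as pushforwards of bundles with vanishing higher cohomology for $t>1$, so Theorem~\ref{thm:duality} cannot be applied to the whole directed system $\{S/I^t\}$. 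Moreover, $\dim\bb{P}(G)=n-1$ is too small: Bott cohomology there lives only in degrees $0,\dots,n-1$, which after dualizing can hit only $\Ext^j$ for $j$ in a window of length $n$, whereas the local cohomology is spread out in degrees $s(m-n)+1$ for $s=1,\dots,n$.

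The paper instead works on $\bb{G}=\bb{G}(n,F)$, the Grassmannian of $n$-dimensional \emph{quotients of $F$}, with $\xi=\mc{R}\oo G$, $\eta=\mc{Q}\oo G$. Here $\Sym(\mc{Q}\oo G)$ pushes forward to $S$ itself (the total space is a birational model of $W^*$, not of $Y$), and the crucial input is Lemma~\ref{lem:vanishingMVminors}: the line bundle $\mc{V}^d=\det(\mc{Q})^{\oo d}\oo\det(G)^{\oo d}$ satisfies $H^0(\bb{G},\mc{V}^d\oo\Sym(\mc{Q}\oo G))=I^d$ (up to a degree shift) with vanishing higher cohomology, for \emph{every} $d$. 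That is precisely what lets one run Theorem~\ref{thm:duality} uniformly in $d$, with $k=\rank\xi=n(m-n)$, producing Ext in degrees $s(m-n)$, $0\le s\le n$. One then passes from $\Ext^j_S(I^d,S)$ to $\Ext^{j+1}_S(S/I^d,S)$ by the short exact sequence $0\to I^d\to S\to S/I^d\to 0$ (Corollary~\ref{cor:extIdminors}). Your proposal never identifies a bundle whose pushforward is $I^d$, which is the whole engine of the computation.

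You also correctly flag, but do not resolve, the issue of the transition maps in the direct limit. The paper proves that $\Ext^j_S(I^d,S)\to\Ext^j_S(I^{d+1},S)$ is \emph{injective} for all $d$, by observing that (after dualizing) these maps arise from split surjections of vector bundles $\mc{M}^*(\mc{V}^{d+1})\twoheadrightarrow\mc{M}^*(\mc{V}^d)$ coming from the contraction by $\det(\mc{Q})\oo\det(G)$, which is adjoint to the split inclusion $\mc{M}(\mc{V}^{d+1})\hookrightarrow\mc{M}(\mc{V}^d)$ realizing $I^{d+1}\subset I^d$. Splitness at the sheaf level forces surjectivity on cohomology, hence injectivity of the dual Ext maps. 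Without this argument, the limit could a priori lose summands, and the clean description by the sets $W(r;s)$ would not follow; so this step needs to be a theorem, not a ``secondary subtlety.'' To repair your proposal you would need to (a) replace $\bb{P}(G)$ by $\bb{G}(n,F)$ and establish the analogue of Lemma~\ref{lem:vanishingMVminors} for all $d$, and (b) prove the injectivity of the transition maps, e.g.\ by the split-surjection argument above.
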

In particular, if we write $S^*=\bigoplus_{i\geq 0}\Hom_{\K}(S_i,\K)$ for the graded dual of $S$, then the top non--vanishing local cohomology module $H^{n\cdot(m-n)+1}_I(S)$ is described as
\[H^{n\cdot(m-n)+1}_I(S)=\det (F^*\oo G^*)\oo S^*.\]

\begin{example}\label{ex:minors}
 Suppose that $m=3$ and $n=2$, so that we are dealing with the $2\times 2$ minors of a $3\times 2$ matrix. The only non--vanishing local cohomology modules are $H^2_I(S)$ and $H^3_I(S)$. The latter is just the injective hull of the residue field, as was first shown by Walther \cite[Example~6.1]{walther}. The Theorem on Maximal Minors yields the description of the former as
\[H^2_I(S)=\bigoplus_{a\geq -1\geq b}S_{(a,-1,b)}F\oo S_{(a,-1+b)}G,\]
where the graded components are determined by the value of $a+b-1$.
\end{example}

The corresponding result for sub--maximal Pfaffians is as follows:

\begin{pfaffians*}[{Theorem \ref{thm:loccohpfaffians}}]
 Consider a $\K$--vector space $F$ of dimension $2n+1$, and integers $r\in\bb{Z}$, $j\geq 0$. For $1\leq s\leq n$ and $\ll=(\ll_1,\cdots,\ll_{2n})\in\bb{Z}^{2n}$ a dominant weight, let
\[\ll(s)=(\ll_1,\cdots,\ll_{2n-2s},-2s,\ll_{2n-2s+1}+1,\cdots,\ll_{2n}+1).\]
We let $W(r;s)$ denote the set of dominant weights $\ll\in\bb{Z}^{2n}$ with $\sum_{i=1}^{2n} \ll_i=2r$, satisfying $\ll_{2i-1}=\ll_{2i}$ for $i=1,\cdots,n$, and such that $\ll(s)\in\bb{Z}^{2n+1}$ is also dominant. We identify $\bw^2 F^*$ with the vector space of $(2n+1)\times(2n+1)$ skew--symmetric matrices, and write $S=\Sym\left(\bw^2 F\right)$ for the ring of polynomial functions on this space. We let $I$ denote the ideal generated by the unique irreducible $\GL(F)$--subrepresentation of $\Sym^n\left(\bw^2 F\right)$ isomorphic to $\bw^{2n} F$ (corresponding to the sub--maximal Pfaffians of the generic $(2n+1)\times(2n+1)$ skew--symmetric matrix). The degree $r$ part of the local cohomology module $H^j_I(S)$ decomposes as a direct sum of irreducible $\GL(F)$--representations as follows:
\[
H^j_I(S)=
\begin{cases}
\bigoplus_{\ll\in W(r;s)} S_{\ll(s)}F, & \textrm{ if } j=2s+1,\textrm{ for some } s=1,\cdots,n; \\
0, & \textrm{otherwise}.  
\end{cases}
\]
\end{pfaffians*}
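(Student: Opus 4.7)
The strategy is to adapt the proof of the Theorem on Maximal Minors to the Pfaffian setting by applying the main technical tool of the paper---the method for computing Ext modules via the geometric technique of Kempf--Weyman combined with Matlis duality---to a suitable desingularization of the sub-maximal Pfaffian variety and its rank strata.

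First I would introduce, for each $s=1,\ldots,n$, the incidence variety
\[Z_s=\{(\phi,V)\in\bw^2 F\times\operatorname{Gr}(2s+1,F):V\subset\ker\phi\},\]
realized as the total space of the equivariant bundle $\bw^2(F/\mc{R})$ on $\operatorname{Gr}(2s+1,F)$, where $\mc{R}$ is the tautological rank-$(2s+1)$ subbundle and $\mc{Q}=F/\mc{R}$. The projection $Z_s\to\bw^2 F$ is a resolution of the Pfaffian stratum $Y_s=\{\phi:\rank(\phi)\leq 2n-2s\}$; in particular $Y_1=V(I)$ is the sub-maximal Pfaffian variety of codimension $3$, and $Y_n=\{0\}$ is the origin. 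These $Z_s$ play the role that the incidence varieties over the Grassmannians $\operatorname{Gr}(s,G)$ played in the maximal minors argument.

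Next I would invoke the Ext method of the paper: Matlis duality together with the geometric technique applied to each $Z_s$ expresses $\Ext^j_S(S/I^t,S)$, and therefore (by direct limits in $t$) $H^j_I(S)$, as a direct sum of sheaf cohomology groups on the $\operatorname{Gr}(2s+1,F)$ of equivariant bundles built from $\Sym(\bw^2\mc{Q}^*)$ and twists by tensor powers of $\det\mc{R}^*$. The Cauchy decomposition $\Sym(\bw^2\mc{Q}^*)=\bigoplus_{\mu}S_\mu\mc{Q}^*$ with $\mu_{2i-1}=\mu_{2i}$ produces precisely the parity constraint in the definition of $W(r;s)$, and also explains the $+1$ shifts appearing in the tail $\ll_{2n-2s+1}+1,\ldots,\ll_{2n}+1$ of $\ll(s)$ after absorbing the $(2s+1)$-fold twist on $\mc{R}$.

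Finally I would apply Bott's theorem on $\operatorname{Gr}(2s+1,F)$. For each $s$ I expect that a single twist $c=c(s)$ of $\det\mc{R}^*$ contributes: after Bott straightening, the combined weight of $S_\mu\mc{Q}\otimes (\det\mc{R}^*)^{\otimes c}$ rearranges into a dominant $\GL(F)$-weight of the shape $\ll(s)$, with cohomology concentrated in a single degree which (after the shift from Matlis duality) equals exactly $2s+1$. The main obstacle I anticipate is the combinatorial bookkeeping in this last step: tracking exactly which twists $c$ contribute, verifying that Bott straightening converts what starts as $2s+1$ equal entries on $\mc{R}^*$ into the asymmetric pattern $(\ldots,-2s,\ll_{2n-2s+1}+1,\ldots,\ll_{2n}+1)$, and checking that the Weyl length plus the Matlis-duality shift combine into exactly $2s+1$. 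Modulo this, the argument runs in direct parallel with the maximal minors case, with the added book-keeping of the skew-symmetric Cauchy formula replacing the ordinary Cauchy formula used there.
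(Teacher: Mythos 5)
Your proposed geometric setup diverges from the paper's, and the divergence introduces a genuine gap rather than just an alternative route.

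The paper works with a \emph{single} Grassmannian, namely $\bb{G}(2n,F)\cong\bb{P}(F)$, for all values of $s$ at once. There $\mc{R}$ is a line bundle (rank $1$), $\mc{Q}=F/\mc{R}$ has rank $2n$, and one sets $\xi=\mc{R}\oo\mc{Q}$ (rank $2n$), $\eta=\bw^2\mc{Q}$. The pushforward of $\mc{M}(\mc{V}^d)=\det(\mc{Q})^{\oo d}\oo\Sym(\bw^2\mc{Q})$ is $I^d$ up to a grading shift (Lemma~\ref{lem:vanishingMVPfaffians}), so Theorem~\ref{thm:duality} applies with $M(\mc{V})=I^d$. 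All the cohomological degrees $j=2s$ in $\Ext^j_S(I^d,S)$ then come from Bott's theorem on this one $\bb{P}^{2n}$, and the shift to $H^{2s+1}_I(S)$ comes from the long exact sequence relating $\Ext^\bullet(I^d,S)$ to $\Ext^\bullet(S/I^d,S)$. Note in particular that the total space of $\bw^2\mc{Q}^*$ over $\bb{P}(F)$ surjects onto all of $W^*$ (every skew form in odd dimension is degenerate), so this is not a resolution of a proper rank stratum; the ideal $I^d$ is carved out by the line bundle twist, not by the geometry of a stratum.

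Your proposal instead uses a different Grassmannian $\operatorname{Gr}(2s+1,F)$ for each $s$, taking $Z_s\to Y_s$ to be the Kempf resolution of the rank-$\leq 2n-2s$ stratum. But Theorem~\ref{thm:duality} computes $\Ext^j_S(M(\mc{V}),S)$ only for the specific module $M(\mc{V})=H^0(X,\mc{M}(\mc{V}))$ pushed forward from $X$. For $s>1$, whatever you push forward from $Z_s$ is supported on $Y_s\subsetneq Y_1$ and is not $I^d$ (nor $S/I^d$), so the theorem does not express $\Ext^j_S(S/I^t,S)$ in terms of cohomology on $\operatorname{Gr}(2s+1,F)$. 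The step ``Matlis duality together with the geometric technique applied to each $Z_s$ expresses $\Ext^j_S(S/I^t,S)$'' is therefore unjustified, and you would need a separate mechanism (e.g.\ a filtration argument as alluded to in the introduction for lower-order minors) to assemble information from the $Z_s$ into data about $I^t$. Relatedly, your description of the maximal minors proof as using ``incidence varieties over the Grassmannians $\operatorname{Gr}(s,G)$'' does not match the paper either: Section~\ref{sec:maxminors} also uses a single Grassmannian $\bb{G}(n,F)$ with $\mc{R}$ of fixed rank $m-n$, and again the index $s$ is produced by Bott's theorem, not by varying the base. The numerology in your last paragraph (expecting ``$2s+1$ equal entries on $\mc{R}^*$'' and a Weyl length giving $2s+1$ directly) is a symptom of this: in the paper $\mc{R}$ has rank $1$, $\rank(\xi)=2n$, Bott produces even cohomological degrees $2s$, and the extra $+1$ comes from passing from $\Ext^j(I^d,S)$ to $\Ext^{j+1}(S/I^d,S)$.

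Finally, even if the framework were repaired, your proposal omits the step that makes the direct limit $\varinjlim_d\Ext^j_S(I^d,S)$ computable: the paper shows the transition maps $\Ext^j_S(I^d,S)\to\Ext^j_S(I^{d+1},S)$ are injective by realizing their duals as pushforwards of split surjections $\mc{M}^*(\mc{V}^{d+1})\twoheadrightarrow\mc{M}^*(\mc{V}^d)$ on the Grassmannian. Without that, knowing the characters of the individual $\Ext$ modules does not yet determine $H^j_I(S)$.
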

Similar to the maximal minors case, the top non--vanishing local cohomology module is described by
\[H^{2n+1}_I(S)=\det\left(\bw^2 F^*\right)\oo S^*.\]

\begin{example}\label{ex:pfaffians}
 Suppose that $n=2$, so that we are dealing with the $4\times 4$ Pfaffians of a $5\times 5$ skew--symmetric matrix. The only non--vanishing local cohomology modules are $H^3_I(S)$ and $H^5_I(S)$, and
\[H^3_I(S)=\bigoplus_{a\geq -2\geq b}S_{(a,a,-2,b,b)}F,\]
where the graded pieces are determined by the value of $a+b-1$.
\end{example}

Our strategy for proving the Theorems on Maximal Minors and on sub--maximal Pfaffians is to use the description of the local cohomology modules as a direct limit \cite[Appendix~1]{eis-syzygies}
\begin{equation}\label{eq:loccohlimitExt}
H^j_I(S)=\varinjlim_d\Ext^j_S(S/I^d,S), 
\end{equation}
and compute explicitly the relevant $\Ext$ modules. To do this, we develop a method for computing certain Ext modules, inspired by the geometric technique for computing syzygies \cite[Ch.~5]{weyman}:

\begin{ext*}[{Theorem \ref{thm:duality}}]
 Let $X$ be a projective variety, and let $W$ be a finite dimensional $\K$--vector space. Suppose that there is an exact sequence
\[0\lra \xi\lra W\oo\mc{O}_X\lra \eta\lra 0,\]
where $\xi$ and $\eta$ are vector bundles on $X$, and denote the rank of $\xi$ by $k$. Consider a vector bundle $\mc{V}$ on $X$ and define
\[\mc{M}(\mc{V})=\mc{V}\oo\Sym(\eta),\quad\mc{M}^*(\mc{V})=\mc{V}\oo\det(\xi)\oo\Sym(\eta^*).\]
Suppose that $H^j(X,\mc{M}(\mc{V}))=0$ for $j>0$, and let
\[M(\mc{V})=H^0(X,\mc{M}(\mc{V})).\] 
With the appropriate grading convention (see Section~\ref{sec:geomtechnique}), we have for each $j\geq 0$ a graded isomorphism
\begin{equation}\label{eq:ExtH}
\Ext^j_S(M(\mc{V}),S)=H^{k-j}(X,\mc{M}^*(\mc{V}))^*, 
\end{equation}
where $(-)^*$ stands for the graded dual.
\end{ext*}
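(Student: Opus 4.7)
The plan is to combine graded local duality with a direct geometric computation of local cohomology. Since $S=\Sym(W)$ is Gorenstein with canonical module $\om_S\cong S(-n)\oo\det(W)$ (where $n=\dim_{\K}W$), local duality furnishes a $\GL(W)$-equivariant graded isomorphism
\[
\Ext^j_S(M(\mc{V}),S)\cong H^{n-j}_{\m}(M(\mc{V}))^*\oo\det(W)^{-1}(n),
\]
where $(-)^*$ denotes the graded dual and $\m\subset S$ is the irrelevant maximal ideal. It therefore suffices to compute $H^i_{\m}(M(\mc{V}))$ geometrically and dualize.

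For the geometric setup, let $Z=\Spec_X(\Sym(\eta))$ be the total space of $\eta^*$ over $X$, with structure maps $\pi:Z\to X$ and $p:Z\to W^*=\Spec(S)$. The map $p$ is proper since $X$ is projective, and its fiber over the origin is the zero section $X_0\subset Z$. The vanishing hypothesis $H^{>0}(X,\mc{M}(\mc{V}))=0$ forces $R^{>0}p_*\pi^*\mc{V}$ (a coherent sheaf on the affine scheme $W^*$, whose global sections are identified with $H^{>0}(Z,\pi^*\mc{V})=H^{>0}(X,\mc{M}(\mc{V}))$) to vanish, so $p_*\pi^*\mc{V}=\widetilde{M(\mc{V})}$. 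For $i\geq 2$, the identification of local cohomology with cohomology of the punctured spectrum, combined with Leray for $p$ restricted to $W^*\setminus\{0\}$, yields
\[
H^i_{\m}(M(\mc{V}))\cong H^{i-1}(W^*\setminus\{0\},\widetilde{M(\mc{V})})\cong H^{i-1}(Z\setminus X_0,\pi^*\mc{V}).
\]
Since $Z\setminus X_0$ is a $\mathbb{G}_m$-bundle over the projective bundle $\mathbb{P}(\eta^*)=\mathrm{Proj}_X(\Sym(\eta))$, its cohomology splits according to internal degree as $\bigoplus_{d\in\bb{Z}}H^\bullet(\mathbb{P}(\eta^*),\rho^*\mc{V}\oo\mc{O}(d))$, where $\rho:\mathbb{P}(\eta^*)\to X$.

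Pushing down along $\rho$, only the rows $R^0\rho_*\mc{O}(d)=\Sym^d\eta$ (for $d\geq 0$) and $R^{r-1}\rho_*\mc{O}(d)=\Sym^{-d-r}(\eta^*)\oo\det\eta^*$ (for $d\leq -r$) contribute to the Leray spectral sequence, where $r=\rank(\eta)$. After summing over $d$, the $R^0$-row assembles into $H^\bullet(X,\mc{M}(\mc{V}))$, which vanishes in positive cohomological degree by hypothesis, while the $R^{r-1}$-row assembles into $H^{\bullet-r+1}(X,\mc{V}\oo\Sym(\eta^*)\oo\det\eta^*)$. The only potentially non-trivial differential of the two-row spectral sequence lands in the $R^0$-row in strictly positive cohomological degree, hence vanishes; the spectral sequence degenerates. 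Using $\det\eta^*\cong\det\xi\oo\det(W)^{-1}$, obtained from $0\lra\xi\lra W\oo\mc{O}_X\lra\eta\lra 0$, one concludes
\[
H^i_{\m}(M(\mc{V}))\cong H^{i-r}(X,\mc{M}^*(\mc{V}))\oo\det(W)^{-1}\quad\text{for }i\geq 2.
\]
Feeding this back into local duality and using $n-r=k$, the $\det(W)$ twists collapse and the remaining shift $(n)$ is absorbed into the grading convention, yielding the claimed identity.

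The principal difficulty is bookkeeping: one must carefully match the twist $\mc{O}(d)$ on the projective bundle with the internal degree on the Ext side, and track the $\det(W)$ factors coming from $\om_S$ and from the defining sequence of $\xi,\eta$, in a $\GL(W)$-equivariant fashion. One also has to handle the edge range $i=0,1$ (equivalently $j=n,n-1$) using the four-term exact sequence $0\to H^0_\m(M)\to M\to H^0(W^*\setminus\{0\},\widetilde{M})\to H^1_\m(M)\to 0$: the same $\rho$-pushforward analysis applied to the third term, together with the vanishing hypothesis, identifies $H^0_\m(M)=0$ and pins down $H^1_\m(M)$ (zero when $r\geq 2$, equal to $H^0(X,\mc{M}^*(\mc{V}))\oo\det(W)^{-1}$ when $r=1$), matching the prediction of the theorem at the endpoints $j=n,n-1$.
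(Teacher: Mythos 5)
Your proof is correct, but it takes a genuinely different route from the paper's. The paper resolves $\mc{M}(\mc{V})$ by the Koszul complex $\mc{F}_\bullet$ with $\mc{F}_i=\mc{V}\oo\bw^i\xi\oo S(-i)$, observes that the graded dual complex, suitably shifted, resolves $\mc{M}^*(\mc{V})$, and then applies the derived projection formula $R\pi_*(\mc{M}^*(\mc{V}))=R\pi_*(\mc{M}(\mc{V}))\stackrel{L}{\oo}_S S^*[k]$ together with the elementary identity $\Tor^S_j(M,S^*)\cong\Ext^j_S(M,S)^*$; the whole argument lives in $D^b(X;W)$ and never mentions local cohomology or projective bundles. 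You instead invoke graded local duality over the Gorenstein ring $S$, reducing the claim to a computation of $H^\bullet_{\m}(M(\mc{V}))$, which you then carry out geometrically: identify $H^{i}_{\m}$ with cohomology of the punctured cone $Z\setminus X_0$, decompose the $\mathbb{G}_m$-torsor over $\Proj_X(\Sym(\eta))$ by internal degree, and run a two-row Leray spectral sequence for $\rho:\Proj_X(\Sym(\eta))\to X$ which degenerates because for each fixed twist $\mc{O}(d)$ at most one of $R^0\rho_*$, $R^{\rank(\eta)-1}\rho_*$ is nonzero. Both arguments use the vanishing hypothesis in an essential and parallel way (to kill the $R^0$ row, respectively to show $R^{>0}\pi_*\mc{M}(\mc{V})=0$), and the final twist bookkeeping $\det\eta^*\cong\det\xi\oo\det(W)^{-1}$ and $k+\rank(\eta)=\dim W$ is also parallel. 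What the paper's approach buys is brevity: it avoids local duality and projective-bundle cohomology entirely, reducing everything to one application of the projection formula. What your approach buys is an explicit description of $H^\bullet_{\m}(M(\mc{V}))$ itself, and it makes the role of Serre duality on the fibers visible. Two small cautions: you should explicitly exclude or handle the degenerate case $\eta=0$ (where $Z\setminus X_0=\emptyset$ and the $\Proj$ is empty), and when $\rank(\eta)=1$ the two rows of your spectral sequence coincide, so the phrasing about ``the $R^{r-1}$-row'' needs a word of care; neither affects the substance. Also note the paper writes $N=\dim W$, reserving $n$ for $\dim G$, so your ``$n=\dim_\K W$'' clashes with the ambient notation.
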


When $I$ is the ideal of maximal minors of a generic matrix, respectively the ideal of sub--maximal Pfaffians of a generic skew--symmetric matrix, knowing $\Ext^*_S(S/I^d,S)$ is equivalent to knowing $\Ext^*_S(I^d,S)$, by a standard long exact sequence argument. In both cases we exhibit $I^d$ as $M(\mc{V}^d)$ for an appropriate choice of a line bundle $\mc{V}^d$ on a Grassmann variety. The explicit description of the $\Ext$ modules is then obtained from (\ref{eq:ExtH}) using Bott's Theorem. The final step in computing the local cohomology modules involves showing that the natural maps $\Ext^j_S(I^d,S)\to\Ext^j_S(I^{d+1},S)$ are injective, which is done by exhibiting the graded duals of these maps as the induced maps in cohomology for certain split surjections $\mc{M}^*(\mc{V}^{d+1})\twoheadrightarrow\mc{M}^*(\mc{V}^d)$.

In more general situations, such as that of the ideals of lower order minors, one can't expect a nice description of $I^d$ as the push-forward from a desingularization of a vector bundle with vanishing higher cohomology. However, for modules $M$ that admit a filtration with associated quotients $Q_i$ that have such a nice description, one can still use duality to compute $\Ext^*_S(Q_i,S)$ and then attempt to reconstruct $\Ext^*_S(M,S)$ from this data. This strategy is employed in \cite{raicu-weyman} in order to calculate all the modules $\Ext^*_S(S/I,S)$ when $I$ is generated by an irreducible $\GL(F)\times\GL(G)$-subrepresentation of $S=\Sym(F\oo G)$.

The paper is organized as follows. In Section~\ref{sec:prelim} we recall some basic facts regarding the representation theory of general linear groups in characteristic zero (Section~\ref{subsec:repthy}), and state Bott's theorem for computing the cohomology of homogeneous vector bundles on Grassmannians (Section~\ref{subsec:bott}). In Section~\ref{sec:geomtechnique} we prove the Theorem on Ext modules. In Section~\ref{sec:maxminors} we use this theorem to compute the local cohomology modules with support in the ideal of maximal minors of a generic matrix, and we prove the Theorem on Covariants. In Section~\ref{sec:maxpfaffians} we perform the corresponding calculation for the ideal of $2n\times 2n$ Pfaffians of a $(2n+1)\times(2n+1)$ generic skew--symmetric matrix.

\section{Preliminaries}\label{sec:prelim}

\subsection{Representation Theory {\cite{ful-har}, \cite[Ch.~2]{weyman}}}\label{subsec:repthy}
Throughout the paper, $\K$ will denote a field of characteristic $0$. If $W$ is a $\K$--vector space of dimension $\dim(W)=N$, a choice of basis determines an isomorphism between $\GL(W)$ and $\GL_N(\K)$. We will refer to $N$--tuples $\ll=(\ll_1,\cdots,\ll_N)\in\bb{Z}^N$ as \defi{weights} of the corresponding maximal torus of diagonal matrices. We say that $\ll$ is a \defi{dominant weight} if $\ll_1\geq\ll_2\geq\cdots\geq\ll_N$. Irreducible representations of $\GL(W)$ are in one-to-one correspondence with dominant weights $\ll$. We denote by $S_{\ll}W$ the irreducible representation associated to $\ll$, often referred to as a \defi{Schur functor}. We write $(a^N)$ for the weight with all parts equal to $a$, and define the \defi{determinant} of $W$ by $\det(W)=S_{(1^N)}W=\bw^N W$. We have $S_{\ll}W\oo\det(W)=S_{\ll+(1^N)}W$, and $S_{\ll}W^*=S_{(-\ll_N,\cdots,-\ll_1)}W$. We write $|\ll|$ for the total size $\ll_1+\cdots+\ll_N$ of $\ll$.
 
When $\ll$ is a dominant weight with $\ll_N\geq 0$, we say that $\ll$ is a \defi{partition} of $r=|\ll|$, and we write $\ll\vdash r$. Note that when we're dealing with partitions we often omit the trailing zeros, so $\ll=(5,2,1)\vdash 8$ is the same as $\ll=(5,2,1,0,0,0)$. The \defi{transpose} $\ll'$ of a partition $\ll$ is obtained by transposing the associated Young diagram. For $\ll=(5,2,1)$, $\ll'=(3,2,1,1,1)$. If $\mu$ is another partition, we write $\mu\subset\ll$ to indicate that $\mu_i\leq\ll_i$ for all $i$.

We will use freely throughout the paper the \defi{Cauchy formulas} which describe, given finite dimensional $\K$--vector spaces $F,G$, the decomposition of the symmetric and exterior powers of $F\oo G$ into a sum of irreducible $\GL(F)\times\GL(G)$--representations \cite[Cor.~2.3.3]{weyman}:
\begin{equation}\label{eq:cauchy}
 \Sym^r(F\oo G)=\bigoplus_{\ll\vdash r} S_{\ll}F\oo S_{\ll}G,\quad
  \bw^r(F\oo G)=\bigoplus_{\ll\vdash r} S_{\ll}F\oo S_{\ll'}G.
\end{equation}
In Section~\ref{sec:maxpfaffians} we will need the description of the symmetric powers of $\bw^2 W$ \cite[Ex.~I.8.6]{macdonald}:
\begin{equation}\label{eq:syme2}
 \Sym^r\left(\bw^2 W\right)=\bigoplus_{\substack{\ll\vdash 2r \\ \ll'_i \textrm{ even}}} S_{\ll}W.
\end{equation}
Note that the condition that $\ll'_i$ is even in (\ref{eq:syme2}) is equivalent to $\ll_1=\ll_2$, $\ll_3=\ll_4$, etc.

\subsection{Bott's Theorem for Grassmannians {\cite[Ch.~4]{weyman}}}\label{subsec:bott}

Consider a finite dimensional $\K$--vector space $W$ of dimension $N$, and a non-negative integer $k\leq N$. Consider the Grassmannian $\bb{G}=\bb{G}(k,W)$ of $k$--dimensional quotients of $W$. On $\bb{G}$ we have the tautological exact sequence
\begin{equation}\label{eq:tautological}
0\lra\mc{R}\lra W\oo\mc{O}_{\bb{G}}\lra\mc{Q}\lra 0, 
\end{equation}
where $\mc{R}$ (resp. $\mc{Q}$) denotes the universal sub--bundle (resp. quotient bundle). We have $\rank(\mc{Q})=k$ and $\rank(\mc{R})=N-k$. Consider two dominant weights $\a=(\a_1,\cdots,\a_k)$ and $\b=(\b_1,\cdots,\b_{N-k})$, and their concatenation $\c=(\c_1,\cdots,\c_r)$. Let $\delta=(N-1,\cdots,0)$, and consider $\c+\delta=(\c_1+N-1,\c_2+N-2,\cdots,\c_N)$. We write $\rm{sort}(\c+\delta)$ for the sequence obtained by arranging the entries of $\c+\delta$ in non-increasing order, and define 
\begin{equation}\label{eq:tildegamma}
\tl{\c}=\rm{sort}(\c+\delta)-\delta. 
\end{equation}

\begin{example}\label{ex:sortgamma}
 If $k=2$, $N=5$, $\a=(3,1)$ and $\b=(4,4,2)$ then $\c=(3,1,4,4,2)$, $\delta=(4,3,2,1,0)$, $\c+\delta=(7,4,6,5,2)$, $\rm{sort}(\c+\delta)=(7,6,5,4,2)$, and $\tl{\c}=(3,3,3,3,2)$.
\end{example}

\begin{theorem}[Bott]\label{thm:bott}
 With the above notation, if $\c+\delta$ has repeated entries, then
\[H^t(\bb{G},S_{\a}\mc{Q}\oo S_{\b}\mc{R})=0\ \forall t\geq 0.\]
Otherwise, writing $l$ for the number of pairs $(x,y)$ with $1\leq x<y\leq N$ and $\c_x-x<\c_y-y$, we have
\[H^l(\bb{G},S_{\a}\mc{Q}\oo S_{\b}\mc{R})=S_{\tl{\c}}W,\]
and $H^t(\bb{G},S_{\a}\mc{Q}\oo S_{\b}\mc{R})=0$ for $t\neq l$. As a consequence, $H^t(\bb{G},S_{\a}\mc{Q})=0$ for all $t>0$ and all $\a=(\a_1,\cdots,\a_k)$ with $\a_k\geq 0$.
\end{theorem}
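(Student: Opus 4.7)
The plan is to derive Bott's theorem on the Grassmannian from the Borel--Weil--Bott theorem on the full flag variety $\mathrm{Fl}(W)$, via pushforward along the natural projection $\pi\colon\mathrm{Fl}(W)\to\bb{G}$ that remembers only the $k$--dimensional quotient. Over a point of $\bb{G}$ corresponding to a short exact sequence $0\to R\to W\to Q\to 0$, the fiber of $\pi$ is $\mathrm{Fl}(R)\times\mathrm{Fl}(Q)$. The first step is to produce the equivariant line bundle $\mc{L}_{(\a,\b)}$ on $\mathrm{Fl}(W)$ whose character is the concatenation $\c=(\a,\b)$, and check using fiberwise Borel--Weil that $\pi_*\mc{L}_{(\a,\b)}=S_{\a}\mc{Q}\oo S_{\b}\mc{R}$ while $R^i\pi_*\mc{L}_{(\a,\b)}=0$ for $i>0$ (this vanishing uses that $\a$ and $\b$ are separately dominant for the fiberwise groups).

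The second step is to apply the Leray spectral sequence, which degenerates because of the vanishing of higher direct images, yielding
\[H^t(\bb{G},S_{\a}\mc{Q}\oo S_{\b}\mc{R})=H^t(\mathrm{Fl}(W),\mc{L}_{(\a,\b)})\quad\text{for all } t\geq 0.\]
The third step is to invoke Borel--Weil--Bott on $\mathrm{Fl}(W)$ for the line bundle $\mc{L}_\c$: if $\c+\delta$ has a repeated entry the cohomology vanishes in every degree; otherwise there is a unique permutation $w$ sorting $\c+\delta$ in strictly decreasing order, and $H^l(\mathrm{Fl}(W),\mc{L}_\c)=S_{\tl{\c}}W$ concentrated in degree $l=\ell(w)$. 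One checks that $\ell(w)$ equals the number of pairs $(x,y)$ with $1\leq x<y\leq N$ and $(\c+\delta)_x<(\c+\delta)_y$, i.e.\ $\c_x-x<\c_y-y$, matching the formula in the statement. Finally, the consequence for $\b=0$ follows since $\c=(\a_1,\cdots,\a_k,0,\cdots,0)$ is already dominant when $\a_k\geq 0$, so $\c+\delta$ is strictly decreasing, $w=\mathrm{id}$, $l=0$, and $\tl{\c}=\c$.

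The main obstacle is Borel--Weil--Bott for $\mathrm{Fl}(W)$ itself, which I would prove by induction on $N=\dim(W)$. The base case $\bb{P}(W)$ reduces to the classical computation of $H^*(\bb{P}(W),\mc{O}(d))$ via Serre duality. For the inductive step, I would use the tower $\mathrm{Fl}(W)\to\bb{P}(W)$ whose fibers are full flag varieties of $W/L$, combined with the analysis of how the ``dot action'' behaves under a simple reflection: when $\c+\delta$ has two adjacent entries in the wrong order one uses the projection to a partial flag variety to show that swapping them (at the cost of a degree shift by one) does not change the total cohomology, while when two adjacent entries are equal one gets a $\bb{P}^1$--fiber whose cohomology of the relevant line bundle vanishes. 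Iterating the swap through the permutation $w$ reduces the general line bundle to a dominant one, whose global sections give $S_{\tl{\c}}W$ by Borel--Weil, producing the predicted cohomological degree $l=\ell(w)$.
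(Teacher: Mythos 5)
The paper does not prove Theorem~\ref{thm:bott}; it is stated as a known result with a citation to \cite[Ch.~4]{weyman}, so there is no ``paper's own proof'' to compare against. That said, your outline is the standard route and essentially reproduces the argument in the cited reference. You correctly identify the two key ingredients: (i) relative Borel--Weil for the projection $\pi\colon\mathrm{Fl}(W)\to\bb{G}$, giving $R^0\pi_*\mc{L}_{(\a,\b)}=S_\a\mc{Q}\oo S_\b\mc{R}$ with vanishing higher direct images (because $\a$ and $\b$ are separately dominant for $\GL(\mc{Q})$ and $\GL(\mc{R})$), hence Leray degeneration; and (ii) Borel--Weil--Bott on the full flag variety, proved by Demazure's induction along $\bb{P}^1$-bundle projections to partial flag varieties, using the dot action and the standard dichotomy between a swap costing one cohomological degree versus a repeated entry of $\c+\delta$ forcing total vanishing via $\mc{O}(-1)$ on a $\bb{P}^1$-fiber. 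The count of $\ell(w)$ as the number of ``inversions'' of $\c+\delta$, and the specialization $\b=0$ for the final consequence, are both handled correctly. The sketch is, of course, not a complete proof: making step (i) precise requires fixing conventions carefully (whether one uses flags of subspaces or quotients, and how the weight $\c=(\a,\b)$ indexes the line bundle $\mc{L}_\c$ so that the restriction to each $\bb{P}^1$-fiber has the claimed degree $\c_i-\c_{i+1}$), and the inductive Bott algorithm in step (ii) needs the usual bookkeeping to handle non-adjacent repeated entries. But the architecture of the argument is correct and matches the literature that the paper cites.
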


\begin{example}\label{ex:bottcohom}
 Continuing with the notation from Example~\ref{ex:sortgamma}, we have $l=2$ ($-1=\c_2-2<\c_3-3=1$ and $-1=\c_2-2<\c_4-4=0$), so
\[H^2(S_{(3,1)}\mc{Q}\oo S_{(4,4,2)}\mc{R})=S_{(3,3,3,3,2)}W.\]
\end{example}

The following observation will be used in the proof of Theorem~\ref{thm:extIdminors}.

\begin{remark}\label{rem:alpharectangle}
 If all parts of $\b$ are equal, i.e. $\b_1=\cdots=\b_{N-k}$, then it follows that $H^t(S_{\a}\mc{Q}\oo S_{\b}\mc{R})=0$ if $t$ is not divisible by $(N-k)$. To see this, note that if $\c+\delta$ has distinct entries, then for $j\leq k$ we either have that $\c_j-j>\c_{k+1}-(k+1)$ or $\c_j-j<\c_N-N$. It follows that the pairs $(x<y)$ with $\c_x-x<\c_y-y$ come in groups of size $(N-k)$.
\end{remark}

\section{Ext modules via the geometric technique and Matlis duality}\label{sec:geomtechnique}

The goal of this section is to prove the Theorem on Ext modules. To achieve this, we combine the main idea behind the geometric technique for computing syzygies {\cite[Ch.~5]{weyman} with graded Matlis duality \cite[Sec.~I.3.6]{bru-her}}. We consider a projective variety $X$, a finite dimensional $\K$--vector space $W$ of dimension $\dim(W)=N$, and an exact sequence
\begin{equation}\label{eq:tautologicalV}
 0\lra \xi\lra W\lra \eta\lra 0,
\end{equation}
where $\xi$ and $\eta$ are vector bundles on $X$. In the above equation, and throughout the rest of paper, we abuse notation by writing $W$ for the coherent sheaf $\mc{O}_X\oo W$. We denote $\Sym(W)$ by $S$, and make the identification $S=\mc{O}_X\oo S$, thinking of $S$ as a sheaf of graded algebras on $X$. Writing $k$ for the rank of $\xi$, the exact sequence (\ref{eq:tautologicalV}) gives rise to a resolution of $\Sym(\eta)$ by graded $S$--modules:
\begin{equation}\label{eq:reswedgeeta}
0\lra\bw^k\xi\oo S(-k)\lra\cdots\lra\bw^i\xi\oo S(-i)\lra\cdots\lra\xi\oo S(-1)\lra S\lra\Sym(\eta)\lra 0.
\end{equation}
Given a vector bundle $\mc{V}$ on $X$, we define the vector bundle $\mc{M}(\mc{V})$ on $X$ by
\begin{equation}\label{eq:MV}
\mc{M}(\mc{V})=\mc{V}\oo\Sym(\eta).
\end{equation}
$\mc{M}(\mc{V})$ is naturally a graded $S$--module, with $\mc{M}_r(\mc{V}) = \mc{V}\oo\Sym^r(\eta)$, $r\geq 0$.

We consider the category of graded quasi--coherent $S$--modules on $X$ (equivalently, the category of quasi--coherent sheaves on $X\times W^*$, which are graded as sheaves on the affine space $W^*$; see \cite[Ex.~II.5.17]{hartshorne}). We write $D^b(X;W)$ for the corresponding bounded derived category. Tensoring (\ref{eq:reswedgeeta}) with $\mc{V}$, we get an identification in $D^b(X;W)$ between $\mc{M}(\mc{V})$ (considered as a complex concentrated in degree~$0$) and $\mc{F}_{\bullet}$, where $\mc{F}_{\bullet}$ is the complex resolving $\mc{M}(\mc{V})$:
\begin{equation}\label{eq:Fi}
\mc{F}_i=\mc{V}\oo\bw^i\xi\oo S(-i),
\end{equation}
with $\mc{F}_i$ living in homological degree $i$ (cohomological degree $-i$).

We write $M^*$ for the graded dual of a graded $S$--module $M$, in particular $S^*_r=\Sym^{-r}W^*$. Note that the graded Matlis dual $\hat{M}$ is related to the graded dual by a shift in degrees: $\hat{M}=M^*\oo\det(W^*)$, graded by $\hat{M}_r=(M^*)_{r+N}\oo\det(W^*)$. Taking the graded dual of (\ref{eq:reswedgeeta}) we obtain a resolution of $\Sym(\eta^*)$:
\begin{equation}\label{eq:reswedgeeta*}
0\lra\Sym(\eta^*)\lra S^*\lra \xi^*\oo S^*(1)\lra\cdots\lra\bw^k\xi^*\oo S^*(k)\lra 0.
\end{equation}
Define $\mc{M}^*(\mc{V})$ by
\begin{equation}\label{eq:MV*}
\mc{M}^*(\mc{V})=\mc{V}\oo\det(\xi)\oo\Sym(\eta^*),
\end{equation}
graded by $\mc{M}^*_r(\mc{V}) = \mc{V}\oo\det(\xi)\oo\Sym^{k-r}(\eta^*)$. Tensoring (\ref{eq:reswedgeeta*}) with $\mc{V}\oo\det(\xi)$, and shifting degrees appropriately (by $-k$), we get an identification in $D^b(X;W)$ between $\mc{M}^*(\mc{V})$ and $\mc{F}^{\bullet}$, where $\mc{F}^{\bullet}$ is the complex resolving $\mc{M}^*(\mc{V})$:
\begin{equation}\label{eq:Fi*}
\mc{F}^i=\mc{V}\oo\bw^{k-i}\xi\oo S^*(i-k),
\end{equation}
with $\mc{F}^i$ living in cohomological degree $i$. Note that in $D^b(X;W)$ we have 
\begin{equation}\label{eq:identificationFs}
\mc{F}^{\bullet}=(\mc{F}_{\bullet}\oo_S S^*)[k],
\end{equation}
where $[k]$ denotes the shift in cohomological degree.

We write $D^b(W)$ for the bounded derived category of graded $S$--modules. The natural projection $\pi:X\times W^*\to W^*$ induces functors $R\pi_{*}:D^b(X;W)\to D^b(W)$, $\pi^*:D^b(W)\to D^b(X;W)$ (since $\pi$ is flat, we don't need to derive $\pi^*$), and we have the projection formula ($\stackrel{L}{\oo}$ denotes the derived tensor product):
\begin{equation}\label{eq:projformula}
R\pi_*(\mc{M}\stackrel{L}{\oo}\pi^*(N))=R\pi_*(\mc{M})\stackrel{L}{\oo} N,\ \rm{for}\ \mc{M}\in D^b(X;W),\ N\in D^b(W). 
\end{equation}
Taking $N$ to be $S^*$ and $\mc{M}=\mc{M}(\mc{V})$, we obtain
\begin{equation}\label{eq:duality}
R\pi_*(\mc{M}^*(\mc{V}))=R\pi_*(\mc{F}^{\bullet})\overset{(\ref{eq:identificationFs})}{=}R\pi_*((\mc{F}_{\bullet}\stackrel{L}{\oo}_S S^*)[k])\overset{(\ref{eq:projformula})}{=}R\pi_*(\mc{F}_{\bullet})\stackrel{L}{\oo}_S S^*[k]=R\pi_*(\mc{M}(\mc{V}))\stackrel{L}{\oo}_S S^*[k]. 
\end{equation}

This yields the following useful method for computing $\Ext$ modules:

\begin{theorem}\label{thm:duality}
 With notation as above, assume that $H^j(X,\mc{M}(\mc{V}))=0$ for $j>0$, and let
 \[M(\mc{V})=H^0(X,\mc{M}(\mc{V})).\] 
 We have for $j\geq 0$ a graded isomorphism
\[\Ext^j_S(M(\mc{V}),S)=H^{k-j}(X,\mc{M}^*(\mc{V}))^*,\]
where $(-)^*$ stands for the graded dual.
\end{theorem}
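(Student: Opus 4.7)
The plan is to extract the theorem directly from the duality equation~(\ref{eq:duality}), which the paper has already established via the Koszul-type resolutions~(\ref{eq:reswedgeeta}) and~(\ref{eq:reswedgeeta*}) together with the projection formula~(\ref{eq:projformula}). First I would apply the vanishing hypothesis: since $H^j(X,\mc{M}(\mc{V}))=0$ for $j>0$, the object $R\pi_*(\mc{M}(\mc{V}))$ is concentrated in cohomological degree zero and equals $M(\mc{V})$. Taking the $i$-th hypercohomology of both sides of~(\ref{eq:duality}), the left side yields $H^i(X,\mc{M}^*(\mc{V}))$. On the right, using the paper's convention for the shift $[k]$ (consistent with~(\ref{eq:identificationFs}), which places the resolution of $\mc{M}^*(\mc{V})$ in cohomological degrees $0,\ldots,k$), I obtain
\[\mc{H}^i\bigl(M(\mc{V})\stackrel{L}{\oo}_S S^*[k]\bigr)=\Tor^S_{k-i}(M(\mc{V}),S^*).\]
Substituting $j=k-i$ gives the intermediate identity $H^{k-j}(X,\mc{M}^*(\mc{V}))=\Tor^S_j(M(\mc{V}),S^*)$.

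Next I would invoke a standard graded Matlis-type identity: $\Tor^S_j(M,S^*)^*\cong\Ext^j_S(M,S)$ for every finitely generated graded $S$-module $M$. To prove it, I would pick a graded free resolution $F_{\bullet}\to M$ by finite direct sums of twists $S(-d)$, and observe the term-by-term identification $\Hom_S(S(-d),S)^*=S(d)^*=S^*(-d)=S(-d)\oo_S S^*$, which assembles into an isomorphism of complexes $\Hom_S(F_{\bullet},S)^*\cong F_{\bullet}\oo_S S^*$. Since the graded $\K$-dual is exact, it commutes with cohomology, so $\Ext^j_S(M,S)^*=\Tor^S_j(M,S^*)$. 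Each graded piece of $\Ext^j_S(M,S)$ is finite-dimensional over $\K$ (because $S$ is Noetherian and $M$ is finitely generated, which applies to $M=M(\mc{V})$ since $X$ is projective and $\mc{V},\eta$ are vector bundles), so double-dualizing yields the claimed form. Combining with the first step completes the proof.

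The heavy lifting---constructing the Koszul-type resolutions, identifying them in $D^b(X;W)$ with $\mc{M}(\mc{V})$ and $\mc{M}^*(\mc{V})$, and deriving~(\ref{eq:duality}) from the projection formula---is already in place, so only the two steps above remain. The main subtlety, rather than a real obstacle, will be keeping the grading conventions consistent: the internal grading on $\mc{M}^*(\mc{V})$ normalized so that $\mc{M}^*_r(\mc{V})=\mc{V}\oo\det(\xi)\oo\Sym^{k-r}(\eta^*)$, the cohomological shift $[k]$, and the graded-dual/shift interaction $M^*(e)=(M(-e))^*$ all need to be tracked carefully so that the isomorphism~(\ref{eq:ExtH}) acquires the correct internal grading on both sides.
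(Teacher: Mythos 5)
Your proposal is correct and follows essentially the same route as the paper: apply the vanishing hypothesis to identify $R\pi_*(\mc{M}(\mc{V}))$ with $M(\mc{V})$, read off $H^{k-j}(X,\mc{M}^*(\mc{V}))=\Tor^S_j(M(\mc{V}),S^*)$ from~(\ref{eq:duality}), and convert to $\Ext$ via graded Matlis duality. The only difference is cosmetic: where the paper cites \cite[Ex.~3.12]{huneke} for the identity $\Tor^S_j(M,S^*)=\Ext^j_S(M,S)^*$, you supply the standard free-resolution argument yourself.
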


\begin{proof}
 The hypothesis implies that $M(\mc{V})=R\pi_*(\mc{M}(\mc{V}))$, so (\ref{eq:duality}) yields
\begin{equation}\label{eq:dualityExtH}
H^{k-j}(X,\mc{M}^*(\mc{V}))=\Tor^S_j(M(\mc{V}),S^*)=\Ext_S^j(M(\mc{V}),S)^*, 
\end{equation}
where the last equality follows from \cite[Ex.~3.12]{huneke}.
\end{proof}

\section{Local cohomology with support in maximal minors}\label{sec:maxminors}

In this section we prove the Theorem on Maximal Minors, and derive the Theorem on Covariants of the Special Linear Group. We consider finite dimensional vector spaces $F,G$ with $\dim(F)=m$, $\dim(G)=n$, and assume that $m>n$. We let $W=F\oo G$ and $S=\Sym(W)$. We identify $W^*$ with the vector space of linear maps $G\to F^*$, and think of $S$ as the ring of polynomial functions on this space. We consider the ideal $I$ generated by the maximal minors of the generic matrix in $W^*$ (this generic matrix is written invariantly as the natural map $G\to F^*\oo W$ induced by the identity $G\oo F=W$). More precisely, $I$ is the ideal generated by the (unique) irreducible $\GL(F)\times\GL(G)$--subrepresentation of $S_n=\Sym^n(F\oo G)$ isomorphic to $\bw^n F\oo\bw^n G$ (see~(\ref{eq:cauchy})).

\subsection{The Ext modules}\label{subsec:extminors} Consider the Grassmannian $\bb{G}=\bb{G}(n,F)$ of $n$--dimensional quotients of the vector space $F$, with the tautological exact sequence
\begin{equation}\label{eq:tautologicalGm-nF}
0\lra\mc{R}\lra F\lra\mc{Q}\lra 0, 
\end{equation}
where $\mc{R}$ (resp. $\mc{Q}$) denotes the universal sub-bundle (resp. quotient bundle) of $F$. We have that $\rank(\mc{R})=m-n$ and $\rank(\mc{Q})=n$. Tensoring (\ref{eq:tautologicalGm-nF}) with $G$, we are in the situation of Section~\ref{sec:geomtechnique} with $X=\bb{G}$, $\xi=\mc{R}\oo G$ and $\eta=\mc{Q}\oo G$. We fix $d\geq 0$ and consider the line bundle 
\begin{equation}\label{eq:Vdminors}
\mc{V}=\mc{V}^d=\det(\mc{Q})^{\oo d}\oo\det(G)^{\oo d} 
\end{equation}
on $\bb{G}$, which gives rise to the vector bundle 
\[\mc{M}(\mc{V})=\mc{V}\oo\Sym(\mc{Q}\oo G)=\det(\mc{Q})^{\oo d}\oo\det(G)^{\oo d}\oo\Sym(\mc{Q}\oo G)\]
as in (\ref{eq:MV}).

\begin{lemma}\label{lem:vanishingMVminors}
 With the notation above, $H^j(\bb{G},\mc{M}(\mc{V}))=0$ for $j>0$, and $H^0(\bb{G},\mc{M}(\mc{V}))=I^d(nd)$.
\end{lemma}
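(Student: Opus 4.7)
The plan is to decompose $\mc{M}(\mc{V})$ using Cauchy's formula, apply Bott's theorem fiberwise to compute its cohomology, and then match the answer against the De Concini–Eisenbud–Procesi type description of the powers $I^d$. Throughout, I will use that $\det(\mc{Q})^{\otimes d}=S_{(d^n)}\mc{Q}$ and that $S_{(d^n)}\mc{Q}\otimes S_{\ll}\mc{Q}=S_{\ll+(d^n)}\mc{Q}$, and similarly for $G$.

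First, apply (\ref{eq:cauchy}) to $\Sym^r(\mc{Q}\otimes G)$, then tensor with $\det(\mc{Q})^{\otimes d}\otimes\det(G)^{\otimes d}$ and re-index by $\mu=\ll+(d^n)$:
\[
\mc{M}(\mc{V})_r \;=\; \bigoplus_{\ll\vdash r} S_{\ll+(d^n)}\mc{Q}\,\oo\,S_{\ll+(d^n)}G
\;=\; \bigoplus_{\substack{\mu\vdash r+nd \\ \mu_n\geq d}} S_{\mu}\mc{Q}\,\oo\,S_{\mu}G.
\]
Since the $G$-factor is a trivial bundle on $\bb{G}$, the cohomology reduces to that of $S_{\mu}\mc{Q}$ on $\bb{G}=\bb{G}(n,F)$. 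Each $\mu$ in the sum satisfies $\mu_n\geq d\geq 0$, so the last clause of Theorem~\ref{thm:bott} (Bott) gives $H^t(\bb{G},S_{\mu}\mc{Q})=0$ for $t>0$ and $H^0(\bb{G},S_{\mu}\mc{Q})=S_{\mu}F$. Summing over $\mu$ yields $H^j(\bb{G},\mc{M}(\mc{V}))=0$ for $j>0$, together with
\[
H^0(\bb{G},\mc{M}(\mc{V}))_r \;=\; \bigoplus_{\substack{\mu\vdash r+nd \\ \mu_n\geq d}} S_{\mu}F\,\oo\,S_{\mu}G.
\]

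The remaining task is to identify this with the degree $(r+nd)$ part of $I^d$. By (\ref{eq:cauchy}) the homogeneous components of $S$ decompose as $\Sym^k(F\oo G)=\bigoplus_{\mu\vdash k}S_{\mu}F\oo S_{\mu}G$, so it suffices to show
\[
(I^d)_k \;=\; \bigoplus_{\substack{\mu\vdash k \\ \mu_n\geq d}} S_{\mu}F\,\oo\,S_{\mu}G.
\]
For the inclusion $\supseteq$, any such $\mu$ can be written as $\mu=(d^n)+\nu$ for some partition $\nu$ with $\nu_n\geq 0$, and the Pieri/Littlewood--Richardson rule identifies $S_{\mu}F\oo S_{\mu}G$ as a summand of $(\bw^n F\oo \bw^n G)^{\oo d}\oo S_{\nu}F\oo S_{\nu}G$, which lies in the image of the multiplication map $I^{\oo d}\oo S\to S$. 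For the opposite inclusion, $I^d$ is spanned by products of $d$ generators from $\bw^n F\oo\bw^n G$ times elements of $S$; by Pieri, every Schur summand $S_{\mu}F\oo S_{\mu}G$ occurring in such a product has at least $d$ columns of full height $n$, i.e.\ $\mu_n\geq d$.

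The two descriptions agree after the degree shift, giving $H^0(\bb{G},\mc{M}(\mc{V}))=I^d(nd)$ as graded $\GL(F)\times\GL(G)$-modules. The main obstacle is really the last step---having a clean justification for the decomposition of $I^d$ into Schur summands---which in the paper may well be quoted from a known result; the cohomological half of the lemma is an immediate consequence of Cauchy plus the Bott vanishing for $S_{\mu}\mc{Q}$ with $\mu_n\geq 0$.
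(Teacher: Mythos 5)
Your cohomological computation is correct and amounts to the same Bott calculation the paper invokes, just written out more explicitly via the Cauchy decomposition; there is nothing to criticize there. The delicate point, as you yourself flag, is identifying the answer with $I^d(nd)$, and it is here that both a gap and a missed ingredient appear.

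The gap is in your ``$\supseteq$'' direction. From the observation that $S_{\mu}F\oo S_{\mu}G$ is a direct summand of the \emph{source} $(\bw^n F\oo \bw^n G)^{\oo d}\oo S_{\nu}F\oo S_{\nu}G$ of the multiplication map, you cannot conclude that it appears in the \emph{image}; an equivariant map can kill any particular isotypic component, and the multiplication $I^{\oo d}\oo S\to S$ certainly has a large kernel. By contrast your ``$\subseteq$'' direction is fine: on the $G$--side, tensoring $S_\nu G$ with $(\bw^n G)^{\oo d}$ forces $\mu_n\geq d$ since $\dim G=n$ makes the Pieri rule rigid. So your argument proves the containment of $(I^d)_k$ in $\bigoplus_{\mu_n\geq d}S_\mu F\oo S_\mu G$, but not equality.

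The paper avoids this problem by a structural observation you omit: the surjection $S=\Sym(F\oo G)\twoheadrightarrow\mc{S}=\Sym(\mc{Q}\oo G)$ makes $\mc{M}(\mc{V})$ an $S$--submodule of $\mc{S}$, so $H^0(\bb{G},\mc{M}(\mc{V}))$ is naturally an \emph{ideal} of $S=H^0(\bb{G},\mc{S})$ (not merely an abstract graded representation of the same character). Once both $H^0(\bb{G},\mc{M}(\mc{V}))$ and $I^d(nd)$ are known to be $\GL(F)\times\GL(G)$--stable ideals inside the multiplicity--free ring $S$, matching their characters forces them to coincide. For the character of $I^d$ the paper quietly relies on the classical De Concini--Eisenbud--Procesi description of powers of determinantal ideals (it is the same information packaged in the Akin--Buchsbaum--Weyman resolution cited in Remark~\ref{rem:resIdminors}); if you want a self-contained argument rather than a citation, you should combine the ideal observation with your ``$\subseteq$'' direction and the fact that $\det^d\in(I^d)_{nd}$ generates the irreducible $S_{(d^n)}F\oo S_{(d^n)}G$, rather than leaning on Pieri for the missing inclusion.
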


\begin{proof} Both $\mc{M}(\mc{V})$ and $\mc{S}=\Sym(\mc{Q}\oo G)$ are $S=\Sym(F\oo G)$--modules, via the natural surjection $S\to\mc{S}$ induced from (\ref{eq:tautologicalGm-nF}). Moreover, we have $\mc{M}(\mc{V})\subset\mc{S}$ is an $S$--submodule, where the inclusion is induced by the inclusion of $\mc{V}$ into $\Sym^{nd}(\mc{Q}\oo\mc{G})$ (see (\ref{eq:cauchy})). If follows that $H^0(\bb{G},\mc{M}(\mc{V}))\subset H^0(\bb{G},\mc{S})=S$ is an ideal. To see that it coincides with $I^{d}$, it suffices to note that its character as a $\GL(F)\times\GL(G)$--representation is the same as that of $I^{d}$ (the shift in degree comes from our grading convention for $\mc{M}(\mc{V})$). This is a consequence of Bott's Theorem~\ref{thm:bott}. The vanishing of the higher cohomology follows from the same theorem.
\end{proof}

\begin{remark}\label{rem:resIdminors}
 Pushing forward (as in \cite[Thm.~5.1.2]{weyman}) the complex (\ref{eq:Fi}) that resolves $\mc{M}(\mc{V})$ we obtain the minimal free resolution of $I^d$, which was originally computed in \cite{akin-buchsbaum-weyman}.
\end{remark}

Lemma~\ref{lem:vanishingMVminors} implies that the hypotheses of Theorem~\ref{thm:duality} apply. Note that $\xi=\mc{R}\oo G$ has rank $n\cdot(m-n)$, so (\ref{eq:dualityExtH}) implies (keeping track of the grading) that
\begin{equation}\label{eq:prelimExtHjminors}
\begin{aligned}
 &\Ext^j_S(I^d,S)_{r-nd} = \Ext^j_S(I^d(nd),S)_{r} = \Ext^j_S(M(\mc{V}),S)_{r}= \\
 &H^{n\cdot(m-n)-j}\left(\bb{G},\Sym^{r+n\cdot(m-n)}(\mc{Q}^*\oo G^*)\oo\det(\mc{R}\oo G)\oo\det(\mc{Q})^{\oo d}\oo\det(G)^{\oo d}\right)^*. 
\end{aligned}
\end{equation}

We get the following description of the modules $\Ext^*_S(I^d,S)$:

\begin{theorem}\label{thm:extIdminors}
 With notation as above, fix $d\geq n$, and define $D_d=\det(F)^{\oo d}\oo\det(G)^{\oo (d+m-n)}$. For $r\in\bb{Z}$ we have
\begin{equation}\label{eq:ExtHjminors}
\Ext^j_S(I^d,S)_r=H^{n\cdot(m-n)-j}\left(\bb{G},\Sym^{r+n\cdot(d+m-n)}(\mc{Q}^*\oo G^*)\oo(\det\mc{R}^*)^{\oo(d-n)}\right)^*\oo D_d^*. 
\end{equation}
If $j$ is not divisible by $(m-n)$, or $j>n\cdot(m-n)$ then $\Ext^j_S(I^d,S)=0$. Otherwise, write $j=s\cdot(m-n)$ for some $0\leq s\leq n$. If $\ll=(\ll_1,\cdots,\ll_n)$ is a dominant weight, we write
\[\ll(s)=(\ll_1,\cdots,\ll_{n-s},\underbrace{-s,\cdots,-s}_{m-n},\ll_{n-s+1}+(m-n),\cdots,\ll_n+(m-n)).\]
We write $W(r;s)$ for the set of dominant weights $\ll=(\ll_1,\cdots,\ll_n)\in\bb{Z}^n$ with $|\ll|=r$ and such that $\ll(s)$ is also dominant. We have
\begin{equation}\label{eq:Extjminors}
\Ext^{s\cdot(m-n)}_S(I^d,S)_r=\bigoplus_{\substack{\ll\in W(r;s) \\ \ll_n\geq -d-(m-n)}} S_{\ll(s)}F\oo S_{\ll}G. 
\end{equation}
\end{theorem}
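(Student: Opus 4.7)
The plan is to apply Theorem~\ref{thm:duality} in the geometric setup prepared in Section~\ref{subsec:extminors} ($X=\bb{G}$, $\xi=\mc{R}\oo G$, $\eta=\mc{Q}\oo G$, $\mc{V}=\det(\mc{Q})^{\oo d}\oo\det(G)^{\oo d}$), whose hypothesis is verified by Lemma~\ref{lem:vanishingMVminors}. The duality theorem expresses $\Ext^j_S(I^d,S)_r$ as the graded dual of an explicit cohomology group on $\bb{G}$; to match (\ref{eq:ExtHjminors}) I would simplify the line bundle $\mc{V}\oo\det\xi$ using $\det(\mc{R}\oo G)=\det(\mc{R})^{\oo n}\oo\det(G)^{\oo(m-n)}$ together with the identity $\det(\mc{Q})\oo\det(\mc{R})=\det F$ from (\ref{eq:tautologicalGm-nF}), obtaining $\mc{V}\oo\det\xi=D_d\oo(\det\mc{R}^*)^{\oo(d-n)}$. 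The constant factor $D_d$ then factors out of cohomology and, after graded dualization, becomes the overall $D_d^*$ twist.

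Given (\ref{eq:ExtHjminors}), the main computation is Bott's theorem. I would expand
\[\Sym^{r+n(d+m-n)}(\mc{Q}^*\oo G^*)=\bigoplus_\mu S_\mu\mc{Q}^*\oo S_\mu G^*\]
via Cauchy, over partitions $\mu$ of $r+n(d+m-n)$ with at most $n$ parts, rewrite $S_\mu\mc{Q}^*=S_{(-\mu_n,\cdots,-\mu_1)}\mc{Q}$ and $(\det\mc{R}^*)^{\oo(d-n)}=S_{((n-d)^{m-n})}\mc{R}$, and apply Theorem~\ref{thm:bott} with $\a=(-\mu_n,\cdots,-\mu_1)$ and $\b=((n-d)^{m-n})$. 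Since all entries of $\b$ coincide, Remark~\ref{rem:alpharectangle} forces any non-zero cohomology to lie in a degree divisible by $(m-n)$, which immediately produces the vanishing of $\Ext^j_S(I^d,S)$ for $j$ not of the form $s(m-n)$.

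The non-vanishing criterion in Bott reads $\mu_i+n+1-i-d\notin[1,m-n]$ for every $i$, that is $\mu_i\leq d+i-n-1$ (\emph{small}) or $\mu_i\geq d+m-2n+i$ (\emph{large}). Because $\mu$ is a partition and the large threshold grows with $i$, the large indices necessarily form an initial segment $\{1,\cdots,n-s\}$ for some $s\in\{0,\cdots,n\}$, and a count of inversions places the cohomology in degree $l=(n-s)(m-n)=n(m-n)-j$, consistent with $j=s(m-n)$. I would then sort $\c+\delta$ explicitly---the large $\a$-entries slot above the $\b$-block and the small $\a$-entries slot below it---and subtract $\delta$ to read off
\[\tl{\c}=(-\mu_n,\cdots,-\mu_{n-s+1},\underbrace{s-d,\cdots,s-d}_{m-n},-\mu_{n-s}+(m-n),\cdots,-\mu_1+(m-n)),\]
so Bott contributes the single summand $S_{\tl{\c}}F\oo S_\mu G^*$ in degree $l$.

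To conclude, I would pass through the graded dual, which turns $S_{\tl{\c}}F\oo S_\mu G^*$ into $S_{\tl{\c}}(F^*)\oo S_\mu(G)$, and then through the $D_d^*$ twist, each of which merely shifts weights by fixed integers. Under the substitution $\ll_i=\mu_i-d-(m-n)$ the partition condition $\mu_i\geq 0$ becomes $\ll_n\geq-d-(m-n)$, the size condition $|\mu|=r+n(d+m-n)$ becomes $|\ll|=r$, the small/large dichotomy translates exactly to the dominance of $\ll(s)$, and a direct check identifies the $F$- and $G$-weights as $\ll(s)$ and $\ll$ respectively. I expect the main obstacle to be the bookkeeping---Bott's sort, the reversals introduced by graded duals, and the cascade of determinant twists---but once these are tracked carefully, the decomposition (\ref{eq:Extjminors}) drops out.
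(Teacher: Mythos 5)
Your proposal is correct and follows essentially the same route as the paper's proof: apply Theorem~\ref{thm:duality} via Lemma~\ref{lem:vanishingMVminors}, simplify the determinant twist to arrive at~(\ref{eq:ExtHjminors}), expand by Cauchy, and run Bott with $\a=(-\mu_n,\dots,-\mu_1)$, $\b=((n-d)^{m-n})$, using Remark~\ref{rem:alpharectangle} for the vanishing and the substitution $\ll_i=\mu_i-d-(m-n)$ at the end. The ``small/large'' dichotomy you describe for $\c+\delta$, the inversion count $l=(n-s)(m-n)$, the sorted $\tl\c$, and the translation of the dominance of $\ll(s)$ all match the paper's computation.
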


\begin{proof}
 Equality (\ref{eq:ExtHjminors}) follows from (\ref{eq:prelimExtHjminors}), by replacing $r$ with $r+nd$, and using the relations 
 \[\det(\mc{R}\oo G)=\det(\mc{R})^{\oo n}\oo\det(G)^{\oo(m-n)}\textrm{ and }\det(\mc{Q})^{\oo d}=\det(F)^{\oo d}\oo\det(\mc{R}^*)^{\oo d}.\]
The rest is a direct consequence of Bott's theorem, as we explain next. Writing $t=r+n\cdot(d+m-n)$ and using (\ref{eq:cauchy}) we get
\begin{equation}\label{eq:Hjcauchyminors}
H^j(\bb{G},\Sym^t(\mc{Q}^*\oo G^*)\oo(\det\mc{R}^*)^{\oo(d-n)})=\bigoplus_{\mu\vdash t} H^j(\bb{G},S_{\mu}\mc{Q}^*\oo S_{(d-n)^{m-n}}\mc{R}^*)\oo S_{\mu}G^*. 
\end{equation}
The vanishing of $\Ext^j_S(I^d,S)$ when $s$ is not divisible by $(m-n)$ follows from Remark~\ref{rem:alpharectangle}, so we are left with verifying (\ref{eq:Extjminors}).

Fix $\mu=(\mu_1,\cdots,\mu_n)\vdash t$ and let $\a=(-\mu_n,\cdots,-\mu_1)$ and $\b=((n-d)^{m-n})$. We assume that
\begin{equation}\label{eq:nzHminors}
H^j(\bb{G},S_{\mu}\mc{Q}^*\oo S_{(d-n)^{m-n}}\mc{R}^*)=H^j(\bb{G},S_{\a}\mc{Q}\oo S_{\b}\mc{R})\neq 0.
\end{equation}
We write $\c$ for the concatenation of $\a$ and $\b$, and let $\tl{\c}$ be as in (\ref{eq:tildegamma}). With the convention $\mu_{n+1}=-\infty$, we denote by $s$ the maximal index in $\{0,1,\cdots,n\}$ for which
\[-\mu_{n-s+1}+(m-n)>s-d.\]
Note that if $-\mu_{n-s+1}+(m-n)=s-d$ for some $s$, then $\c+\delta$ has repeated entries, contradicting the assumption (\ref{eq:nzHminors}) (see Theorem~\ref{thm:bott}). We get that the number of pairs $(1\leq x<y\leq m)$ with $\c_x-x<\c_y-y$ is $(n-s)\cdot(m-n)$, and
\[\tl{\c}=(-\mu_n,\cdots,-\mu_{n-s+1},(s-d)^{m-n},-\mu_{n-s}+(m-n),\cdots,-\mu_1+(m-n)).\]
Letting $\ll_i=\mu_i-(d+m-n)$ for $i=1,\cdots,n$, it follows that $\ll$ is a partition of $t-n\cdot(d+m-n)=r$, and for $j=(n-s)\cdot(m-n)$ we get using Theorem~\ref{thm:bott}
\[\left(H^j(\bb{G},S_{\mu}\mc{Q}^*\oo S_{(d-n)^{m-n}}\mc{R}^*)\oo S_{\mu}G^*\right)^*\oo D_d^*=S_{\ll(s)}F\oo S_{\ll}G.\]
It follows from (\ref{eq:ExtHjminors}) and (\ref{eq:Hjcauchyminors}) that all summands of $\Ext^j_S(I^d,S)_r$ have the form $S_{\ll(s)}F\oo S_{\ll}G$ for some $\ll\in W(r;s)$. The condition $\ll_n\geq -d-(m-n)$ in (\ref{eq:Extjminors}) follows from the fact that $\mu_n\geq 0$.

Conversely, assume that $\ll\in\bb{Z}^n$ is a weight occurring in (\ref{eq:Extjminors}). Then $\mu_i=\ll_i+d+(m-n)$ for $i=1,\cdots,n$ defines a partition $\mu\vdash t=r+n\cdot(d+m-n)$. The condition $\ll(s)$ dominant implies
\[\ll_{n-s+1}<\ll_{n-s+1}+(m-n)\leq -s\leq\ll_{n-s},\]
which in turn yields
\[-\mu_{n-s+1}+(m-n)>s-d\textrm{ and }-\mu_{n-s}+(m-n)<s-d+1.\]
It then follows by reversing the steps above that each $S_{\ll(s)}F\oo S_{\ll}G$ is a summand of $\Ext^j_S(I^d,S)_r$, proving~(\ref{eq:Extjminors}).
\end{proof}

\begin{corollary}\label{cor:extIdminors}
 $\Hom_S(I^d,S)=S$, $\Ext^1(S/I^d,S)=0$, and $\Ext^{j+1}_S(S/I^d,S)=\Ext^j_S(I^d,S)$ for $j>0$.
\end{corollary}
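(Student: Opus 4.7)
The plan is to derive all three statements as formal consequences of the long exact $\Ext$ sequence associated to
\[0\lra I^d\lra S\lra S/I^d\lra 0,\]
with the only substantive input being the identification $\Hom_S(I^d,S)=S$ coming from Theorem~\ref{thm:extIdminors}.

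Applying $\Hom_S(-,S)$ and using the vanishing $\Ext^j_S(S,S)=0$ for $j>0$ together with $\Hom_S(S,S)=S$, the long exact sequence immediately produces isomorphisms
\[\Ext^j_S(I^d,S)\cong\Ext^{j+1}_S(S/I^d,S)\quad\textrm{for all }j\geq 1,\]
which handles the third assertion, and leaves the four-term sequence
\[0\lra\Hom_S(S/I^d,S)\lra S\lra\Hom_S(I^d,S)\lra\Ext^1_S(S/I^d,S)\lra 0\]
in low degrees. The leftmost term vanishes because $S$ is a domain and $I^d$ is a nonzero ideal: any $\phi$ in it satisfies $\phi(1)\cdot I^d=0$, forcing $\phi(1)=0$. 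So the first two assertions reduce to showing that the natural multiplication map $S\to\Hom_S(I^d,S)$ is surjective.

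To establish this surjectivity, I would read off the $j=0$ case of Theorem~\ref{thm:extIdminors}. This forces $s=0$, in which case $\ll(0)=(\ll_1,\cdots,\ll_n,0,\cdots,0)$ is dominant exactly when $\ll$ is a partition, and the constraint $\ll_n\geq -d-(m-n)$ is automatic. The Cauchy formula~(\ref{eq:cauchy}) then yields
\[\Hom_S(I^d,S)_r=\bigoplus_{\ll\vdash r}S_{\ll}F\oo S_{\ll}G=\Sym^r(F\oo G)=S_r,\]
identifying $\Hom_S(I^d,S)$ with $S$ and simultaneously forcing $\Ext^1_S(S/I^d,S)=0$. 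There is essentially no obstacle here: all of the genuine content is already packaged inside Theorem~\ref{thm:extIdminors}, and everything else is formal homological algebra.
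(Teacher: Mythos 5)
Your proof is correct and essentially the same as the paper's: both extract the character of $\Hom_S(I^d,S)$ from the $s=0$ case of Theorem~\ref{thm:extIdminors}, identify it with $S$ via the Cauchy formula, deduce that the injective multiplication map $S\to\Hom_S(I^d,S)$ is an isomorphism, and read off the remaining two assertions from the long exact sequence. The only (inessential) differences are that you spell out why $\Hom_S(S/I^d,S)=0$ and omit the paper's alternative depth-based argument for $\Ext^1_S(S/I^d,S)=0$.
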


\begin{proof}
 The exact sequence $0\lra I^d\lra S\lra S/I^d\lra 0$ induces an exact sequence
\[0=\Hom_S(S/I^d,S)\lra S=\Hom_S(S,S)\overset{\iota}{\lra}\Hom_S(I^d,S)\lra\Ext^1_S(S/I^d,S)\lra\Ext^1_S(S,S)=0.\]
Taking $s=0$ in Theorem~\ref{thm:extIdminors} and noting that the condition of $\ll(0)$ being dominant forces $\ll_n\geq 0$ (and so, in particular, $\lambda_n \geq -d-(m-n)$), we get
\[\Hom_S(I^d,S)_r=\Ext^0_S(I^d,S)_r\overset{(\ref{eq:Extjminors})}{=}\bigoplus_{\ll\vdash r}S_{\ll}F\oo S_{\ll}G\overset{(\ref{eq:cauchy})}{=}\Sym^r(F\oo G),\]
which forces the injective map $\iota$ to be an isomorphism. Since $\iota$ is onto, $\Ext^1_S(S/I^d,S)=0$. Alternatively, the same conclusion follows from the fact that $\textrm{depth}(I^d)=\textrm{depth}(I)=m-n+1>1$, \cite[Thm.~2.5]{bru-vet} and \cite[Cor.~17.8,\ Prop.~18.4]{eisCA}.

The last statement follows from the exact sequence
\[0=\Ext^j_S(S,S)\lra\Ext^j_S(I^d,S)\lra\Ext^{j+1}_S(S/I^d,S)\lra\Ext^{j+1}_S(S,S)=0.\qedhere\]
\end{proof}

\subsection{Local cohomology with support in maximal minors}\label{subsec:loccohminors}

\begin{theorem}\label{thm:loccohminors}
 With the notation as in Theorem~\ref{thm:extIdminors}, the degree $r\in\bb{Z}$ part of the local cohomology modules of $S$ with support in the ideal $I$ can be described as follows
\[H^j_I(S)_r=
\begin{cases}
\bigoplus_{\ll\in W(r;s)} S_{\ll(s)}F\oo S_{\ll}G, & j=s\cdot(m-n)+1,\ s=1,\cdots,n; \\
0, & \textrm{otherwise}.  
\end{cases}
\]
\end{theorem}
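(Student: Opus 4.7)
The starting point is the standard identification $H^j_I(S)=\varinjlim_d \Ext^j_S(S/I^d,S)$ from \cite[App.~1]{eis-syzygies}. The case $j=0$ is immediate since $S$ is a domain, while Corollary~\ref{cor:extIdminors} gives $\Ext^1_S(S/I^d,S)=0$ for all $d$, handling $j=1$. For $j\geq 2$, Corollary~\ref{cor:extIdminors} lets us rewrite
\[H^j_I(S)=\varinjlim_d \Ext^{j-1}_S(I^d,S),\]
at which point Theorem~\ref{thm:extIdminors} plugs in the answer: the terms $\Ext^{j-1}_S(I^d,S)$ are zero unless $j-1=s\cdot(m-n)$ for some $s=1,\ldots,n$, which already yields the stated vanishing outside of $j=s\cdot(m-n)+1$. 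In that case the explicit formula
\[\Ext^{s\cdot(m-n)}_S(I^d,S)_r=\bigoplus_{\substack{\ll\in W(r;s)\\ \ll_n\geq -d-(m-n)}} S_{\ll(s)}F\oo S_{\ll}G\]
is precisely the desired sum, except for the auxiliary constraint $\ll_n\geq -d-(m-n)$, which becomes vacuous once $d$ is large enough (depending on $\ll$).

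Consequently, the theorem will follow provided we can show that the transition maps
\[\Ext^{s\cdot(m-n)}_S(I^d,S)\lra\Ext^{s\cdot(m-n)}_S(I^{d+1},S),\]
induced by the natural inclusion $I^{d+1}\hookrightarrow I^d$, are injective; the direct limit then simply removes the constraint and identifies each isotypic component on the nose. This injectivity is the main point, and I expect it to be the hard step. The strategy is the one flagged in the introduction: take the graded Matlis dual of the transition map, and realize the dual as the map in cohomology induced by a morphism of sheaves $\mc{M}^*(\mc{V}^{d+1})\to\mc{M}^*(\mc{V}^d)$ on $\bb{G}$, via Theorem~\ref{thm:duality}.

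To construct this sheaf map and check that it is a split surjection, use the Cauchy summand $\det(\mc{Q})\oo\det(G)\hookrightarrow\Sym^n(\mc{Q}\oo G)$, whose dual is a split surjection $\Sym^n(\mc{Q}^*\oo G^*)\twoheadrightarrow\det(\mc{Q}^*)\oo\det(G^*)$. Multiplying the appropriate factors, this produces a natural split surjection $\mc{M}^*(\mc{V}^{d+1})\twoheadrightarrow\mc{M}^*(\mc{V}^d)$ of coherent sheaves on $\bb{G}$, compatible with the inclusion $\mc{M}(\mc{V}^{d+1})\hookrightarrow\mc{M}(\mc{V}^d)$ inside $\Sym(\mc{Q}\oo G)$ that was used in Lemma~\ref{lem:vanishingMVminors}. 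A split surjection of sheaves induces a surjection on each cohomology group, so the dual map on $\Ext$ is injective, as required. The main obstacle I anticipate lies here: keeping the grading conventions straight (since $M(\mc{V}^d)=I^d(nd)$ introduces shifts that differ with $d$) and verifying that the sheaf map just described is genuinely the Matlis dual of the natural transition map $\Ext^j_S(I^d,S)\to\Ext^j_S(I^{d+1},S)$ produced by $I^{d+1}\hookrightarrow I^d$.

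Assembling these pieces, passing to the limit $d\to\infty$ yields
\[H^{s\cdot(m-n)+1}_I(S)_r=\bigoplus_{\ll\in W(r;s)}S_{\ll(s)}F\oo S_{\ll}G,\]
which completes the proof.
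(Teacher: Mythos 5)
Your proposal is correct and follows essentially the same route as the paper's own proof: reduce to the direct-limit description of local cohomology, use Corollary~\ref{cor:extIdminors} and Theorem~\ref{thm:extIdminors} to handle the vanishing and identify the summands, and establish injectivity of the transition maps by taking graded duals via Theorem~\ref{thm:duality} and exhibiting a split surjection $\mc{M}^*(\mc{V}^{d+1})\twoheadrightarrow\mc{M}^*(\mc{V}^d)$ arising from the Cauchy inclusion $\det(\mc{Q})\oo\det(G)\hookrightarrow\Sym^n(\mc{Q}\oo G)$. The step you flag as the main obstacle (matching the sheaf-level split surjection with the Matlis dual of the natural map $\Ext^j_S(I^d,S)\to\Ext^j_S(I^{d+1},S)$) is exactly the step the paper also treats, by observing that the pushforward of the split \emph{inclusion} $\mc{M}(\mc{V}^{d+1})\hookrightarrow\mc{M}(\mc{V}^d)$ recovers $I^{d+1}\subset I^d$, and then dualizing through the projection formula identity~(\ref{eq:duality}).
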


\begin{proof}
 Recall the formula (\ref{eq:loccohlimitExt}) for computing local cohomology. The vanishing part of Theorem~\ref{thm:loccohminors} follows from \cite[Thm.~5.10]{witt}, or alternatively by combining (\ref{eq:loccohlimitExt}), Corollary~\ref{cor:extIdminors} and Theorem~\ref{thm:extIdminors}.

Given the description of the $\Ext$ modules from Theorem~\ref{thm:extIdminors} and using Corollary~\ref{cor:extIdminors}, the result will follow once we prove that the successive maps
\begin{equation}\label{eq:Extmapsminors}
\Ext^j_S(I^d,S)\lra\Ext^j_S(I^{d+1},S) 
\end{equation}
in (\ref{eq:loccohlimitExt}) are all injective. Equivalently, we need to show that the natural map in the bounded derived category $D^b(W)$ of graded $S$--modules
\begin{equation}\label{eq:dualExtmapdetminors}
I^{d+1}\stackrel{L}{\oo}_S S^*\to I^d\stackrel{L}{\oo}_S S^*
\end{equation}
induced from the inclusion $I^{d+1}\subset I^d$ yields surjective maps on cohomology.

Recall the definition of the line bundles $\mc{V}^d$ from (\ref{eq:Vdminors}), and denote $\Sym(\mc{Q}\oo G)$ by $\mc{S}$ as before. The natural multiplication map
\begin{equation}\label{eq:multVS}
(\det(\mc{Q})\oo\det(G))\oo\mc{S}\to\mc{S} 
\end{equation}
is a split inclusion of vector bundles on $\bb{G}$. Tensoring it with $\mc{V}^d$ induces a split inclusion $\mc{M}(\mc{V}^{d+1})\hookrightarrow\mc{M}(\mc{V}^d)$. The pushforward of this is precisely the inclusion $I^{d+1}\subset I^d$. The multiplication map (\ref{eq:multVS}) induces an adjoint contraction map
\begin{equation}\label{eq:contractVS*}
(\det(\mc{Q})\oo\det(G))\oo\mc{S}^*\to\mc{S}^*
\end{equation}
which is a split surjection of vector bundles on $\bb{G}$. Tensoring with $\mc{V}^d\oo\det(\mc{R}\oo G)$ we get a split surjection
\begin{equation}\label{eq:maponM*V}
\mc{M}^*(\mc{V}^{d+1})\twoheadrightarrow\mc{M}^*(\mc{V}^d). 
\end{equation}
It is immediate to see that the pushforward of (\ref{eq:maponM*V}) coincides (up to a shift in cohomological degree as in (\ref{eq:duality})) with (\ref{eq:dualExtmapdetminors}). Since (\ref{eq:maponM*V}) is split, it follows that (\ref{eq:dualExtmapdetminors}) induces surjective maps in cohomology, which is what we wanted to prove.
\end{proof}

\subsection{Proof of the Theorem on Covariants of the Special Linear Group}

We are now ready to prove the main application of our calculation of local cohomology:

\begin{theorem}\label{thm:covariants}
 Consider a finite dimensional $\K$--vector space $G$ of dimension $n$, an integer $m>n$, and let $H=\SL(G)$ be the special linear group, $W=G^{\oplus m}$, and $S=\Sym(W)$. If $U=S_{\mu}G$ is the irreducible $H$--representation associated to the partition $\mu=(\mu_1\geq\mu_2\geq\cdots\geq\mu_n=0)$ then $(S\oo U)^H$ is Cohen--Macaulay if and only if $\mu_s-\mu_{s+1}<m-n$ for all $s=1,\cdots,n-1$. 
\end{theorem}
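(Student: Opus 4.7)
The plan is to combine the explicit local cohomology description from the Theorem on Maximal Minors with the standard dictionary between Cohen--Macaulayness of modules of covariants and local cohomology of $S$ supported in the null cone. First I would invoke Van den Bergh's lemma \cite[Lemma~4.1]{VdB:tracerings} to identify
\[H^j_{\m_H}\bigl((S\oo U)^H\bigr)\cong \bigl(H^j_I(S)\oo U\bigr)^H,\]
where $\m_H$ is the irrelevant ideal of $S^H$ and $I$ is the ideal of maximal minors (whose vanishing locus is precisely the $\SL(G)$ null cone on $F^*\oo G^*$, so $\sqrt{\m_H\cdot S}=I$). Since $(S\oo U)^H$ has full support on $\Spec(S^H)$, its Krull dimension equals $\dim S^H=n\cdot(m-n)+1$, and Cohen--Macaulayness is equivalent to the vanishing $\bigl(H^j_I(S)\oo U\bigr)^H=0$ for every $j<n\cdot(m-n)+1$.

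Next, I would apply the Theorem on Maximal Minors: $H^j_I(S)$ is nonzero only for $j=s\cdot(m-n)+1$ with $s=1,\ldots,n$, and the top case $s=n$ corresponds to the dimension index and is not required to vanish. The remaining cases $s=1,\ldots,n-1$ reduce, via the explicit decomposition into $S_{\ll(s)}F\oo S_\ll G$ summands indexed by $W(r;s)$, to the purely representation--theoretic question: must $(S_\ll G\oo S_\mu G)^{\SL(G)}=0$ for every dominant $\ll$ whose $\ll(s)$ is also dominant?

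To settle this, I would use that $S_\mu G$ is already $\SL(G)$--irreducible (since $\mu_n=0$) and that $S_\ll G$ restricted to $\SL(G)$ depends only on $\ll-\ll_n\cdot(1^n)$. The invariants are therefore nonzero precisely when $\ll=c\cdot(1^n)+\mu^*$ for some $c\in\bb{Z}$, where $\mu^*=(\mu_1,\mu_1-\mu_{n-1},\ldots,\mu_1-\mu_2,0)$ indexes the $\SL(G)$--dual of $S_\mu G$. Substituting this into the two dominance requirements $\ll_{n-s}\geq -s$ and $\ll_{n-s+1}+(m-n)\leq -s$ for $\ll(s)$ yields
\[\mu_{s+1}-\mu_1-s \;\leq\; c \;\leq\; \mu_s-\mu_1-s-(m-n),\]
which has an integer solution if and only if $\mu_s-\mu_{s+1}\geq m-n$. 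Thus the vanishing holds simultaneously for $s=1,\ldots,n-1$ if and only if $\mu_s-\mu_{s+1}<m-n$ for all such $s$, which is exactly the assertion of the theorem.

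The most delicate point I anticipate is making the Van den Bergh reduction clean and making sure the dimension count and null--cone identification match; the representation--theoretic book--keeping then becomes essentially a one--variable linear programming problem once $c=\ll_n$ is chosen as the free parameter. A minor subtlety is the boundary $s=n$: there the inequality $\ll_{n-s}\geq -s$ is vacuous, so an admissible $c$ always exists, confirming (consistently with dimension theory) that the top local cohomology $H^{n\cdot(m-n)+1}_I(S)\oo U$ has nonzero $\SL(G)$--invariants and thus does not obstruct Cohen--Macaulayness.
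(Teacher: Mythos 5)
Your proof is correct and follows essentially the same route as the paper: reduce via Van den Bergh's lemma and the dimension count to the vanishing of $(H^j_I(S)\oo U)^H$ for $j<n(m-n)+1$, feed in the Theorem on Maximal Minors, and then analyze when $(S_\ll G\oo S_\mu G)^{\SL(G)}\neq 0$ can coexist with the dominance constraints on $\ll(s)$. The only cosmetic difference is that the paper encodes the $\SL$-duality condition as the equalities $\mu_1+\ll_n=\mu_2+\ll_{n-1}=\cdots=\mu_n+\ll_1$ and reads off $\mu_s-\mu_{s+1}=\ll_{n-s}-\ll_{n-s+1}\geq m-n$ directly, whereas you parametrize the admissible $\ll$ as $\mu^*+c\cdot(1^n)$ and solve a one-variable feasibility problem in $c$; the two are equivalent and arrive at the same inequality.
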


\begin{proof}
We write $W=G^{\oplus m}=F\oo G$ for some vector space $F$ of dimension $m$. Recall that $H=\SL(G)$ denotes the special linear group, $U=S_{\mu}G$ for some partition $\mu=(\mu_1\geq\cdots\geq\mu_n=0)$. Since $S^H$ is a ring of dimension $n\cdot(m-n)+1$, and $(S\oo U)^H$ is a torsion free $S^H$--module, it follows that the Cohen--Macaulayness of $(S\oo U)^H$ is equivalent to the vanishing of the local cohomology modules $H^j_{\m}\left((S\oo U)^H\right)$ for $j<n\cdot(m-n)+1$, where $\m$ denotes the maximal homogeneous ideal of the ring of invariants $S^H$ (note that $\m$ is generated by the maximal minors of the generic $m\times n$ matrix). We have by \cite[Lemma~4.1]{VdB:tracerings} that
\begin{equation}\label{eq:VdBloccohinvariants}
H^j_{\m}\left((S\oo U)^H\right)=\left(H^j_I(S)\oo U\right)^H, 
\end{equation}
where $I\subset S$ is as before the ideal of maximal minors of the generic $m\times n$ matrix. Using (\ref{eq:VdBloccohinvariants}) and Theorem~\ref{thm:loccohminors}, we see that $(S\oo U)^H$ is not Cohen--Macaulay if and only if
\begin{equation}\label{eq:SllSmuHnot0}
(S_{\ll}G\oo S_{\mu}G)^H\neq 0 
\end{equation}
for some $\ll\in W(r;s)$ with $r\in\bb{Z}$, and $1\leq s\leq n-1$. Note that (\ref{eq:SllSmuHnot0}) is equivalent to $S_{\mu}G\simeq S_{\ll}G^*$ as $H$--modules, which in turn is equivalent to
\begin{equation}\label{eq:llcomplmu}
\mu_1+\ll_n=\mu_2+\ll_{n-1}=\cdots=\mu_n+\ll_1. 
\end{equation}
The condition $\ll\in W(r;s)$ for some $r\in\bb{Z}$ is equivalent to $\ll_{n-s}\geq -s\geq\ll_{n-s+1}+(m-n)$, and in particular implies $\ll_{n-s}-\ll_{n-s+1}\geq m-n$. It follows that if (\ref{eq:llcomplmu}) holds for some $\ll\in W(r;s)$ then $\mu_s-\mu_{s+1}=\ll_{n-s}-\ll_{n-s+1}\geq m-n$. We conclude that if $(S\oo U)^H$ is not Cohen--Macaulay then there exists an index $1\leq s\leq n-1$ such that $\mu_s-\mu_{s+1}\geq m-n$.

Conversely, assume that $\mu_s-\mu_{s+1}\geq(m-n)$ for some $1\leq s\leq n-1$. Define $\ll\in\bb{Z}^n$ by letting $\ll_{n-s}=-s$ and $\ll_i=\ll_{n-s}+\mu_{s+1}-\mu_{n-i+1}$ (so that (\ref{eq:llcomplmu}) holds). In particular, we have
\[\ll_{n-s+1}=\ll_{n-s}+\mu_{s+1}-\mu_s=-s-(\mu_s-\mu_{s+1})\leq -s-(m-n),\] 
which shows that $\ll\in W(|\ll|;s)$. By the preceding remarks, this implies that $(S\oo U)^H$ is not Cohen--Macaulay, concluding the proof of the theorem.
\end{proof}

\section{Local cohomology with support in sub--maximal Pfaffians}\label{sec:maxpfaffians}

In this section we prove the Theorem on sub--maximal Pfaffians. We consider a finite dimensional vector space $F$ with $\dim(F)=2n+1$, let $W=\bw^2 F$ and $S=\Sym(W)$. We identify $W^*$ with the vector space of skew--symmetric linear maps $f:F\to F^*$ (i.e. maps satisfying $f^*=-f$), and think of $S$ as the ring of polynomial functions on this space. We consider the ideal $I$ generated by the $2n\times 2n$ Pfaffians of the generic skew--symmetric matrix in $W^*$ (this generic matrix is written invariantly as the natural map $F\to F^*\oo W$ induced by the projection $F\oo F\to W$). More precisely, $I$ is the ideal generated by the (unique) irreducible $\GL(F)$--subrepresentation of $S_n=\Sym^n\left(\bw^2 F\right)$ isomorphic to $\bw^{2n} F$ (see~(\ref{eq:syme2})).


Consider the Grassmannian $\bb{G}=\bb{G}(2n,F)$ of $2n$--dimensional quotients of $F$ (the projective space of lines in $F$) with the tautological sequence (\ref{eq:tautologicalGm-nF}). We have $\rank(\mc{R})=1$ and $\rank(\mc{Q})=2n$. Using \cite[Ex.~5.16]{hartshorne}, (\ref{eq:tautologicalGm-nF}) induces an exact sequence
\[0\lra\mc{R}\oo\mc{Q}\lra\bw^2 F\lra\bw^2\mc{Q}\lra 0.\]
We define $\xi=\mc{R}\oo\mc{Q}$ and $\eta=\bw^2\mc{Q}$, so that the above exact sequence coincides with (\ref{eq:tautologicalV}). We let 
\begin{equation}\label{eq:VdPfaffians}
\mc{V}=\mc{V}^d=\det(\mc{Q})^{\oo d},
\end{equation}
and
\[\mc{M}(\mc{V})=\mc{V}\oo\Sym\left(\bw^2\mc{Q}\right)=\det(\mc{Q})^{\oo d}\oo\Sym\left(\bw^2\mc{Q}\right)\]
as in (\ref{eq:MV}).

\begin{lemma}\label{lem:vanishingMVPfaffians}
With the notation above, $H^j(\bb{G},\mc{M}(\mc{V}))=0$ for $j>0$, and $H^0(\bb{G},\mc{M}(\mc{V}))=I^d(nd)$.
\end{lemma}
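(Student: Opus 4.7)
The plan is to imitate the proof of Lemma~\ref{lem:vanishingMVminors}. First, I would observe that $\mc{S} = \Sym(\bw^2 \mc{Q})$ is naturally a quotient $\mc{O}_{\bb{G}}$-algebra of $S = \Sym(\bw^2 F)$ via the surjection $\bw^2 F \twoheadrightarrow \bw^2 \mc{Q}$ induced from (\ref{eq:tautologicalGm-nF}). Using (\ref{eq:syme2}), the partition $(d^{2n})$ has all column lengths equal to $2n$, which is even, so $\mc{V} = \det(\mc{Q})^{\oo d} = S_{(d^{2n})}\mc{Q}$ appears as a summand of $\Sym^{nd}(\bw^2 \mc{Q})$. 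Tensoring with $\mc{S}$ exhibits $\mc{M}(\mc{V}) = \mc{V} \oo \mc{S}$ as an $S$-submodule of $\mc{S}$.

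Next, I would verify both claims via Bott's theorem. For the vanishing of higher cohomology, (\ref{eq:syme2}) lets me write each graded piece of $\mc{M}(\mc{V})$ as a direct sum of bundles $\det(\mc{Q})^{\oo d} \oo S_\ll \mc{Q} = S_{\ll + (d^{2n})}\mc{Q}$, each with non-negative weight. The final assertion of Theorem~\ref{thm:bott} then yields $H^j(\bb{G}, \mc{M}(\mc{V})) = 0$ for $j > 0$. For the identification $H^0(\bb{G}, \mc{M}(\mc{V})) = I^d(nd)$, applying Bott's theorem termwise to (\ref{eq:syme2}) (both for $\Sym(\bw^2 F) = S$ and for $\mc{S}$) shows that the natural algebra map $S \to H^0(\bb{G}, \mc{S})$ is a $\GL(F)$-equivariant graded isomorphism. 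Hence $H^0(\bb{G}, \mc{M}(\mc{V}))$ sits inside $S$ as a graded ideal, and it remains to show that its $\GL(F)$-character coincides with that of $I^d$, up to the shift dictated by our grading convention on $\mc{M}(\mc{V})$.

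The principal obstacle is making this character match rigorous. The minimal-generator piece is unproblematic: $H^0(\bb{G}, \det(\mc{Q})^{\oo d}) = S_{(d^{2n})} F$ corresponds under the isomorphism $S \simeq H^0(\bb{G}, \mc{S})$ to the image of $\Sym^d(\bw^{2n} F) = S_{(d^{2n})} F$ inside $\Sym^{nd}(\bw^2 F)$, i.e.\ to the minimal generators of $I^d$ in degree $nd$. For higher degrees, I would push forward the Koszul-type resolution (\ref{eq:Fi}) along $\pi\colon \bb{G} \to \Spec \K$ and, using Bott's theorem termwise as in Remark~\ref{rem:resIdminors}, produce a minimal graded $S$-free resolution of $H^0(\bb{G}, \mc{M}(\mc{V}))$. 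Comparing it with the known minimal free resolution of the $d$-th power of the sub-maximal Pfaffian ideal (equivalently, matching the two $\GL(F)$-characters directly via plethystic identities built from (\ref{eq:syme2})) then forces $H^0(\bb{G}, \mc{M}(\mc{V})) = I^d(nd)$, completing the proof.
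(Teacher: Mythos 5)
Your proof is correct and follows essentially the same route as the paper, whose proof of this lemma simply refers back to the proof of Lemma~\ref{lem:vanishingMVminors}: exhibit $\mc{M}(\mc{V})$ as a subsheaf of $\mc{S}$, apply Bott's theorem for the vanishing, and match $\GL(F)$-characters for $H^0$. For the last step, rather than pushing forward the Koszul complex and comparing full minimal free resolutions (or invoking plethystic identities), it is cleaner to note that $\Sym\left(\bw^2 F\right)$ is multiplicity-free as a $\GL(F)$-representation by~(\ref{eq:syme2}), so two $\GL(F)$-stable ideals of $S$ with the same character automatically coincide; the Bott computation together with your identification of the degree-$nd$ piece then closes the argument directly.
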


\begin{proof} See the proof of Lemma~\ref{lem:vanishingMVminors}.
\end{proof}

\begin{remark}
 As in Remark~\ref{rem:resIdminors}, pushing forward the complex (\ref{eq:Fi}), we obtain the minimal free resolution of $I^d$, which was previously computed in \cite{bof-san}, and in a more general setting by \cite{kustin-ulrich}.
\end{remark}

Noting that $\xi=\mc{R}\oo\mc{Q}$ has rank $2n$, Theorem~\ref{thm:duality} applies to give
\begin{equation}\label{eq:prelimExtHjPfaffians}
\Ext^j_S(I^d,S)_{r-nd}=H^{2n-j}\left(\bb{G},\Sym^{r+2n}\left(\bw^2\mc{Q}^*\right)\oo\det(\mc{R}\oo\mc{Q})\oo\det(\mc{Q})^{\oo d}\right)^*.
\end{equation}

\begin{theorem}\label{thm:extIdpfaffians}
 With notation as above, fix $d\geq 0$, and define $D_d=\det(F)^{\oo (d+1)}$. For $r\in\bb{Z}$ we have
\begin{equation}\label{eq:ExtHjPfaffians}
\Ext^j_S(I^d,S)_r=H^{2n-j}\left(\bb{G},\Sym^{r+n\cdot(d+2)}\left(\bw^2\mc{Q}^*\right)\oo\Sym^{d+1-2n}\mc{R}^*\right)^*\oo D_d^*. 
\end{equation}
If $j$ is odd, or $j>2n$, then $\Ext^j(I^d,S)=0$. Otherwise, write $j=2s$ for some $0\leq s\leq n$. If $\ll=(\ll_1,\cdots,\ll_{2n})$ is a dominant weight, we write
\[\ll(s)=(\ll_1,\cdots,\ll_{2n-2s},-2s,\ll_{2n-2s+1}+1,\cdots,\ll_{2n}+1).\]
We write $W(r;s)$ for the set of dominant weights $\ll=(\ll_1,\cdots,\ll_{2n})\in\bb{Z}^{2n}$ with $|\ll|=r$, all $\ll'_i$ even, and such that $\ll(s)$ is also dominant. We have
\[\Ext^j_S(I^d,S)_r=\bigoplus_{\substack{\ll\in W(r;s) \\ \ll_{2n}\geq -d-2}} S_{\ll(s)}F.\]
\end{theorem}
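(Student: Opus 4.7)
The strategy parallels that of Theorem~\ref{thm:extIdminors}. First, I would apply Theorem~\ref{thm:duality} with $X = \bb{G}$, $\xi = \mc{R}\oo\mc{Q}$, $\eta = \bw^2\mc{Q}$ and $\mc{V} = \mc{V}^d = \det(\mc{Q})^{\oo d}$, noting that $k = \rank(\xi) = 2n$ and $M(\mc{V}) = I^d(nd)$ by Lemma~\ref{lem:vanishingMVPfaffians}, to obtain (\ref{eq:prelimExtHjPfaffians}). The algebraic identities $\det(\mc{R}\oo\mc{Q}) = \mc{R}^{\oo 2n}\oo\det(\mc{Q})$ and $\mc{R} = \det(F)\oo\det(\mc{Q})^*$ (valid since $\mc{R}$ is a line bundle and $\det(F) = \mc{R}\oo\det(\mc{Q})$) allow one to pull out $D_d = \det(F)^{\oo(d+1)}$, leaving $\mc{R}^{*\oo(d+1-2n)} = \Sym^{d+1-2n}\mc{R}^*$ inside; this yields (\ref{eq:ExtHjPfaffians}) after the shift $r\mapsto r+nd$.

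Next, I would decompose the cohomology using (\ref{eq:syme2}):
\[\Sym^{r+n(d+2)}(\bw^2\mc{Q}^*) = \bigoplus_{\mu} S_\mu\mc{Q}^*,\]
where $\mu$ ranges over partitions of $2(r+n(d+2))$ with $\mu'_i$ even (equivalently, $\mu_{2i-1}=\mu_{2i}$). For each such $\mu$, I would apply Bott's Theorem~\ref{thm:bott} to $S_\mu\mc{Q}^*\oo S_{(d+1-2n)}\mc{R}^*$, realized as $S_\a\mc{Q}\oo S_\b\mc{R}$ with $\a = (-\mu_{2n},\ldots,-\mu_1)$ and $\b = (2n-d-1)$. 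A direct computation shows that $(\c+\delta)_i = (2n+1-i)-\mu_{2n-i+1}$ for $1\leq i\leq 2n$ is strictly decreasing in $i$ (since $j-\mu_j$ is strictly increasing in $j$), while $(\c+\delta)_{2n+1} = 2n-d-1$. The location $p' = 2s+1$ at which $2n-d-1$ slots into the sorted sequence determines the cohomological degree $l = 2n-2s$, hence $j = 2n-l = 2s$.

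The vanishing for odd $j$ is then forced by the constraint $\mu_{2i-1}=\mu_{2i}$: a strict inequality $\mu_{q+1}>\mu_{q+2}$ (necessary to avoid repeated entries in $\c+\delta$ at the insertion point) is only possible when $q$ is odd, which forces $p'$ to be odd and $j=2s$ to be even. Setting $\ll_i = \mu_i - (d+2)$, the slotting conditions $\mu_{2n-2s}\geq d+2-2s$ and $\mu_{2n-2s+1}\leq d+1-2s$ translate exactly into the dominance of $\ll(s)$, and the bound $\mu_{2n}\geq 0$ becomes $\ll_{2n}\geq -d-2$. The identification $H^{2n-j}(\cdots)^*\oo D_d^* \simeq S_{\ll(s)}F$ then follows by routine bookkeeping on Bott's output $S_{\tl{\c}}F$: the graded dual replaces $\tl{\c}$ by the reversed negative, and tensoring with $\det(F)^{*\oo(d+1)}$ produces precisely the entry $-2s$ at position $2n-2s+1$, with a uniform $+1$ shift on the tail.

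The main obstacle is managing the cascade of shifts and dualities --- the grading shift from $M(\mc{V}) = I^d(nd)$ back to $I^d$, the graded dual $(-)^*$ applied to the cohomology, and the tensor with $D_d^*$ --- while simultaneously verifying that the parity constraint from (\ref{eq:syme2}) and the Bott sorting combinatorics conspire to produce the specific structure of $\ll(s)$, rather than some arbitrarily permuted variant.
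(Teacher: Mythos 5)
Your proposal is correct and follows essentially the same route as the paper: the paper's own proof is just a one--line reference to the argument of Theorem~\ref{thm:extIdminors}, and you have correctly fleshed out the Pfaffian--specific details, namely the decomposition via (\ref{eq:syme2}), the Bott insertion combinatorics, and the observation that the constraint $\mu_{2i-1}=\mu_{2i}$ forces the insertion point $p'$ to be odd and hence $j$ to be even (playing the role that Remark~\ref{rem:alpharectangle} plays in the minors case). One small bookkeeping note: with your substitution $\ll_i=\mu_i-(d+2)$ and $\mu\vdash 2(r+n(d+2))$, one gets $|\ll|=2r$ rather than $|\ll|=r$ as written in the theorem statement (the Introduction's version of the Theorem on sub--maximal Pfaffians has the correct normalization $\sum\ll_i=2r$), so your derivation in fact exposes a minor typo in the statement as printed.
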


\begin{proof}
 The equivalence between (\ref{eq:ExtHjPfaffians}) and (\ref{eq:prelimExtHjPfaffians}) follows from the relations
\[\det(\mc{R}\oo\mc{Q})=\det(\mc{R})^{\oo(2n)}\oo\det{\mc{Q}}\textrm{ and }\det(F)=\det(\mc{R})\oo\det(\mc{Q}).\]
The characters of the Ext modules are computed using Bott's theorem, as in the proof of Theorem~\ref{thm:extIdminors}.
\end{proof}

\begin{corollary}\label{cor:extIdpfaffians}
 $\Hom_S(I^d,S)=S$, $\Ext^1(S/I^d,S)=0$, and $\Ext^{j+1}_S(S/I^d,S)=\Ext^j_S(I^d,S)$ for $j>0$.
\end{corollary}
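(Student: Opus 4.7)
The plan is to mirror the proof of Corollary~\ref{cor:extIdminors} almost verbatim, substituting the Cauchy decomposition (\ref{eq:cauchy}) with the symmetric-power decomposition (\ref{eq:syme2}). Applying $\Hom_S(-,S)$ to the short exact sequence $0\to I^d\to S\to S/I^d\to 0$, and using that $I^d$ contains nonzerodivisors (so $\Hom_S(S/I^d,S)=0$) together with $\Ext^{j}_S(S,S)=0$ for $j\geq 1$, the resulting long exact sequence collapses to
$$0\longrightarrow S\stackrel{\iota}{\longrightarrow}\Hom_S(I^d,S)\longrightarrow\Ext^1_S(S/I^d,S)\longrightarrow 0$$
together with isomorphisms $\Ext^j_S(I^d,S)\cong\Ext^{j+1}_S(S/I^d,S)$ for all $j\geq 1$. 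This last family of isomorphisms yields the third assertion of the corollary at once.

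It remains to show that the natural multiplication map $\iota\colon S\hookrightarrow\Hom_S(I^d,S)$ is an isomorphism, which simultaneously establishes $\Hom_S(I^d,S)=S$ and $\Ext^1_S(S/I^d,S)=0$. For this, I would evaluate Theorem~\ref{thm:extIdpfaffians} at $j=2s=0$. When $s=0$ the formula for $\ll(0)$ reduces to $\ll(0)=(\ll_1,\dots,\ll_{2n},0)$, which is dominant precisely when $\ll_{2n}\geq 0$; this in turn automatically implies the auxiliary bound $\ll_{2n}\geq -d-2$. Hence, graded piece by graded piece,
$$\Hom_S(I^d,S)_r=\bigoplus_{\substack{\ll\vdash r\\ \ll'_i\text{ even}}} S_{\ll}F,$$
and by (\ref{eq:syme2}) this is exactly $\Sym^r\bigl(\bw^2 F\bigr)=S_r$. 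Since $\iota$ is injective and the two sides have identical finite-dimensional graded pieces, $\iota$ must be an isomorphism.

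There is no substantive obstacle: the proof is a formal transcription of Corollary~\ref{cor:extIdminors} once Theorem~\ref{thm:extIdpfaffians} is in hand. The only book-keeping point to verify is that the grading shift built into the line bundle $\mc{V}^d$ of (\ref{eq:VdPfaffians}) and the twist by $D_d^*$ appearing in (\ref{eq:ExtHjPfaffians}) combine so that one really reads $\Hom_S(I^d,S)_r$ directly from the $s=0$ summand of Theorem~\ref{thm:extIdpfaffians}, exactly as in the maximal minors setting. Alternatively, if one wished to bypass the explicit character computation, one could invoke the Buchsbaum--Eisenbud structure theorem to see that $S/I$ is Cohen--Macaulay of codimension $3$, so $\mathrm{grade}(I^d)\geq 2$, and apply \cite[Cor.~17.8, Prop.~18.4]{eisCA} to obtain $\Ext^1_S(S/I^d,S)=0$ and $\Hom_S(I^d,S)=S$ directly.
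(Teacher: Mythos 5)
Your proposal is correct and follows exactly the approach the paper intends: the paper's own proof of this corollary reads simply ``The proof is the same as for Corollary~\ref{cor:extIdminors},'' and you have carried out that transcription faithfully, including the long exact sequence reduction, the $s=0$ evaluation of Theorem~\ref{thm:extIdpfaffians} to identify $\Hom_S(I^d,S)$ with $S$, and the alternative grade/depth argument via \cite[Cor.~17.8, Prop.~18.4]{eisCA}.

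One bookkeeping point worth flagging: as written, your displayed identification
\[
\Hom_S(I^d,S)_r=\bigoplus_{\substack{\ll\vdash r\\ \ll'_i\text{ even}}}S_{\ll}F
\]
cannot literally equal $\Sym^r\bigl(\bw^2 F\bigr)$, since (\ref{eq:syme2}) indexes $\Sym^r\bigl(\bw^2 F\bigr)$ by partitions of $2r$, not $r$; you are inheriting a normalization slip in the statement of Theorem~\ref{thm:extIdpfaffians} (whose $|\ll|=r$ disagrees with the introduction's $\sum_i\ll_i=2r$). With the corrected normalization $|\ll|=2r$ the $s=0$ summand is precisely $\Sym^r\bigl(\bw^2 F\bigr)=S_r$, so $\iota$ is an isomorphism and the rest of your argument goes through unchanged.
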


\begin{proof}
 The proof is the same as for Corollary~\ref{cor:extIdminors}.
\end{proof}

With the same proof as Theorem~\ref{thm:loccohminors}, we obtain:

\begin{theorem}\label{thm:loccohpfaffians}
 With the notation as in Theorem~\ref{thm:extIdpfaffians}, the degree $r\in\bb{Z}$ part of the local cohomology modules of $S$ with support in the ideal $I$ can be described as follows
\begin{equation}\label{eq:loccohpfaffians}
H^j_I(S)_r=
\begin{cases}
\bigoplus_{\ll\in W(r;s)} S_{\ll(s)}F, & j=2s+1,\ s=1,\cdots,n; \\
0, & \textrm{otherwise}.  
\end{cases}
\end{equation}
\end{theorem}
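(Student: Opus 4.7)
The plan is to mirror exactly the proof of Theorem~\ref{thm:loccohminors}, with the skew--symmetric Cauchy formula (\ref{eq:syme2}) playing the role that the ordinary Cauchy formula played in the maximal minors case. The starting point is the direct limit description (\ref{eq:loccohlimitExt}) together with Corollary~\ref{cor:extIdpfaffians}, which gives $\Ext^1_S(S/I^d,S)=0$ and $\Ext^{j+1}_S(S/I^d,S)=\Ext^j_S(I^d,S)$ for $j\geq 1$. Thus computing $H^j_I(S)$ reduces to computing $\varinjlim_d \Ext^{j-1}_S(I^d,S)$ for $j\geq 2$ (with $H^0_I(S)=H^1_I(S)=0$ automatic). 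The vanishing half of (\ref{eq:loccohpfaffians}) is then immediate from Theorem~\ref{thm:extIdpfaffians}, since $\Ext^{j-1}_S(I^d,S)=0$ whenever $j-1$ is odd or $j-1>2n$, i.e. whenever $j$ is even or $j>2n+1$.

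For the non--vanishing part I would first match, at each fixed $d$, the graded piece $\Ext^{2s}_S(I^d,S)_r$ given by Theorem~\ref{thm:extIdpfaffians} with the formula in the statement, observing that as $d\to\infty$ the extraneous boundary condition $\ll_{2n}\geq -d-2$ becomes vacuous and one recovers exactly $\bigoplus_{\ll\in W(r;s)} S_{\ll(s)}F$. To legitimize passing to the limit, I need to show that the structure maps
\begin{equation*}
\Ext^j_S(I^d,S)\lra \Ext^j_S(I^{d+1},S)
\end{equation*}
are injective, so that the direct limit is just the union of the images. By the duality (\ref{eq:dualityExtH}) supplied by the Theorem on Ext modules, this injectivity is equivalent to showing that the dual map $I^{d+1}\stackrel{L}{\oo}_S S^*\to I^d\stackrel{L}{\oo}_S S^*$ induces surjections on cohomology.

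The key geometric input, and the main thing to set up carefully, is the analogue of the split multiplication map in the minors proof. From (\ref{eq:syme2}) applied to $\mc{Q}$ one sees that $\det(\mc{Q})=\bw^{2n}\mc{Q}$ is the Schur functor corresponding to $\ll=(1^{2n})$, which satisfies $\ll_1=\ll_2,\ll_3=\ll_4,\ldots$, so $\det(\mc{Q})$ appears as an irreducible $\GL$--equivariant direct summand of $\Sym^n(\bw^2\mc{Q})$. Therefore the multiplication
\begin{equation*}
\det(\mc{Q})\oo\Sym\bigl(\bw^2\mc{Q}\bigr)\lra\Sym\bigl(\bw^2\mc{Q}\bigr)
\end{equation*}
is a split inclusion of vector bundles on $\bb{G}=\bb{G}(2n,F)$. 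Tensoring with $\mc{V}^d=\det(\mc{Q})^{\oo d}$ produces a split inclusion $\mc{M}(\mc{V}^{d+1})\hookrightarrow\mc{M}(\mc{V}^d)$ whose pushforward is $I^{d+1}\subset I^d$ (using Lemma~\ref{lem:vanishingMVPfaffians}). Taking the adjoint contraction $\det(\mc{Q})\oo\Sym(\bw^2\mc{Q})^*\twoheadrightarrow\Sym(\bw^2\mc{Q})^*$ and tensoring with $\mc{V}^d\oo\det(\mc{R}\oo\mc{Q})$ yields a split surjection of vector bundles $\mc{M}^*(\mc{V}^{d+1})\twoheadrightarrow\mc{M}^*(\mc{V}^d)$.

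The main (and essentially only) obstacle is verifying that this split surjection, via the chain of identifications in (\ref{eq:duality}), is the correct dual of the inclusion $I^{d+1}\subset I^d$ up to the cohomological shift $[k]$ with $k=2n$; this is formally identical to the corresponding step in the minors case, with $\xi=\mc{R}\oo\mc{Q}$ in place of $\mc{R}\oo G$. Granting this identification, splitness forces surjectivity on every cohomology group, hence injectivity of the Ext transition maps, and the description (\ref{eq:loccohpfaffians}) follows by combining Corollary~\ref{cor:extIdpfaffians}, Theorem~\ref{thm:extIdpfaffians}, and passage to the limit.
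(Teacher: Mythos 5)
Your proposal is correct and follows exactly the same route as the paper, which simply cites ``the same proof as Theorem~\ref{thm:loccohminors}'' — you have spelled out that adaptation faithfully, including the key observation that $\det(\mc{Q})=\bw^{2n}\mc{Q}=S_{(1^{2n})}\mc{Q}$ is a direct summand of $\Sym^n\bigl(\bw^2\mc{Q}\bigr)$ via (\ref{eq:syme2}), which yields the split inclusion $\mc{M}(\mc{V}^{d+1})\hookrightarrow\mc{M}(\mc{V}^d)$ and hence the split surjection $\mc{M}^*(\mc{V}^{d+1})\twoheadrightarrow\mc{M}^*(\mc{V}^d)$ needed for injectivity of the Ext transition maps.
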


\section*{Acknowledgments} 
This work was carried out while we were visiting the Mathematical Sciences Research Institute, for whose hospitality we are grateful. We would like to thank Srikanth Iyengar, Andrew Kustin, Steven Sam, Anurag Singh, Michel Van den Bergh, and Uli Walther for helpful conversations. Experiments with the computer algebra software Macaulay2 \cite{M2} have provided valuable insights. The first author acknowledges the support of the National Science Foundation Grant No.~1303042. The second author acknowledges the support of the Alexander von Humboldt Foundation, and of the National Science Foundation Grant No.~0901185.


	\begin{bibdiv}
		\begin{biblist}

\bib{akin-buchsbaum-weyman}{article}{
   author={Akin, Kaan},
   author={Buchsbaum, David A.},
   author={Weyman, Jerzy},
   title={Resolutions of determinantal ideals: the submaximal minors},
   journal={Adv. in Math.},
   volume={39},
   date={1981},
   number={1},
   pages={1--30},
   issn={0001-8708},
   review={\MR{605350 (82h:13011)}},
   doi={10.1016/0001-8708(81)90055-4},
}

\bib{bof-san}{article}{
   author={Boffi, Giandomenico},
   author={S{\'a}nchez, Rafael},
   title={On the resolutions of the powers of the Pfaffian ideal},
   journal={J. Algebra},
   volume={152},
   date={1992},
   number={2},
   pages={463--491},
   issn={0021-8693},
   review={\MR{1194315 (93j:14065)}},
   doi={10.1016/0021-8693(92)90044-M},
}

\bib{brion}{article}{
   author={Brion, Michel},
   title={Sur les modules de covariants},
   language={French, with English and French summaries},
   journal={Ann. Sci. \'Ecole Norm. Sup. (4)},
   volume={26},
   date={1993},
   number={1},
   pages={1--21},
   issn={0012-9593},
   review={\MR{1209911 (95c:14062)}},
}

\bib{bru-her}{book}{
   author={Bruns, Winfried},
   author={Herzog, J{\"u}rgen},
   title={Cohen-Macaulay rings},
   series={Cambridge Studies in Advanced Mathematics},
   volume={39},
   publisher={Cambridge University Press},
   place={Cambridge},
   date={1993},
   pages={xii+403},
   isbn={0-521-41068-1},
   review={\MR{1251956 (95h:13020)}},
}

\bib{bru-vet}{book}{
   author={Bruns, Winfried},
   author={Vetter, Udo},
   title={Determinantal rings},
   series={Lecture Notes in Mathematics},
   volume={1327},
   publisher={Springer-Verlag},
   place={Berlin},
   date={1988},
   pages={viii+236},
   isbn={3-540-19468-1},
   review={\MR{953963 (89i:13001)}},
}

\bib{eisCA}{book}{
   author={Eisenbud, David},
   title={Commutative algebra},
   series={Graduate Texts in Mathematics},
   volume={150},
   note={With a view toward algebraic geometry},
   publisher={Springer-Verlag},
   place={New York},
   date={1995},
   pages={xvi+785},
   isbn={0-387-94268-8},
   isbn={0-387-94269-6},
   review={\MR{1322960 (97a:13001)}},
   doi={10.1007/978-1-4612-5350-1},
}

\bib{eis-syzygies}{book}{
   author={Eisenbud, David},
   title={The geometry of syzygies},
   series={Graduate Texts in Mathematics},
   volume={229},
   note={A second course in commutative algebra and algebraic geometry},
   publisher={Springer-Verlag},
   place={New York},
   date={2005},
   pages={xvi+243},
   isbn={0-387-22215-4},
   review={\MR{2103875 (2005h:13021)}},
}

\bib{ful-har}{book}{
   author={Fulton, William},
   author={Harris, Joe},
   title={Representation theory},
   series={Graduate Texts in Mathematics},
   volume={129},
   note={A first course;
   Readings in Mathematics},
   publisher={Springer-Verlag},
   place={New York},
   date={1991},
   pages={xvi+551},
   isbn={0-387-97527-6},
   isbn={0-387-97495-4},
   review={\MR{1153249 (93a:20069)}},
}

\bib{M2}{article}{
          author = {Grayson, Daniel R.},
          author = {Stillman, Michael E.},
          title = {Macaulay 2, a software system for research
                   in algebraic geometry},
          journal = {Available at \url{http://www.math.uiuc.edu/Macaulay2/}}
        }

\bib{hartshorne}{book}{
   author={Hartshorne, Robin},
   title={Algebraic geometry},
   note={Graduate Texts in Mathematics, No. 52},
   publisher={Springer-Verlag},
   place={New York},
   date={1977},
   pages={xvi+496},
   isbn={0-387-90244-9},
   review={\MR{0463157 (57 \#3116)}},
}

\bib{hochster-roberts}{article}{
   author={Hochster, Melvin},
   author={Roberts, Joel L.},
   title={Rings of invariants of reductive groups acting on regular rings
   are Cohen-Macaulay},
   journal={Advances in Math.},
   volume={13},
   date={1974},
   pages={115--175},
   issn={0001-8708},
   review={\MR{0347810 (50 \#311)}},
}

\bib{huneke}{article}{
   author={Huneke, Craig},
   title={Lectures on local cohomology},
   note={Appendix 1 by Amelia Taylor},
   conference={
      title={Interactions between homotopy theory and algebra},
   },
   book={
      series={Contemp. Math.},
      volume={436},
      publisher={Amer. Math. Soc.},
      place={Providence, RI},
   },
   date={2007},
   pages={51--99},
   review={\MR{2355770 (2008m:13031)}},
   doi={10.1090/conm/436/08404},
}

\bib{kustin-ulrich}{article}{
   author={Kustin, Andrew R.},
   author={Ulrich, Bernd},
   title={A family of complexes associated to an almost alternating map,
   with applications to residual intersections},
   journal={Mem. Amer. Math. Soc.},
   volume={95},
   date={1992},
   number={461},
   pages={iv+94},
   issn={0065-9266},
   review={\MR{1091668 (92i:13012)}},
}

\bib{macdonald}{book}{
   author={Macdonald, I. G.},
   title={Symmetric functions and Hall polynomials},
   series={Oxford Mathematical Monographs},
   edition={2},
   note={With contributions by A. Zelevinsky;
   Oxford Science Publications},
   publisher={The Clarendon Press Oxford University Press},
   place={New York},
   date={1995},
   pages={x+475},
   isbn={0-19-853489-2},
   review={\MR{1354144 (96h:05207)}},
}

\bib{raicu-weyman}{article}{
   author={Raicu, Claudiu},
   author={Weyman, Jerzy},
   title={Local cohomology with support in generic determinantal ideals},
   journal = {arXiv},
   number = {1309.0617},
   date={2013}
}

\bib{stanley}{article}{
   author={Stanley, Richard P.},
   title={Linear Diophantine equations and local cohomology},
   journal={Invent. Math.},
   volume={68},
   date={1982},
   number={2},
   pages={175--193},
   issn={0020-9910},
   review={\MR{666158 (83m:10017)}},
   doi={10.1007/BF01394054},
}

\bib{VdB:tracerings}{article}{
   author={Van den Bergh, Michel},
   title={Trace rings of generic matrices are Cohen-Macaulay},
   journal={J. Amer. Math. Soc.},
   volume={2},
   date={1989},
   number={4},
   pages={775--799},
   issn={0894-0347},
   review={\MR{1001850 (90j:14065)}},
   doi={10.2307/1990894},
}

\bib{VdB:CMcov}{article}{
   author={Van den Bergh, Michel},
   title={Cohen-Macaulayness of modules of covariants},
   journal={Invent. Math.},
   volume={106},
   date={1991},
   number={2},
   pages={389--409},
   issn={0020-9910},
   review={\MR{1128219 (92m:14063)}},
   doi={10.1007/BF01243917},
}

\bib{VdB:sl2}{article}{
   author={Van den Bergh, Michel},
   title={A converse to Stanley's conjecture for ${\rm Sl}_2$},
   journal={Proc. Amer. Math. Soc.},
   volume={121},
   date={1994},
   number={1},
   pages={47--51},
   issn={0002-9939},
   review={\MR{1181176 (94g:20062)}},
   doi={10.2307/2160363},
}

\bib{VdB:survey}{article}{
   author={Van den Bergh, Michel},
   title={Modules of covariants},
   conference={
      title={ 2},
      address={Z\"urich},
      date={1994},
   },
   book={
      publisher={Birkh\"auser},
      place={Basel},
   },
   date={1995},
   pages={352--362},
   review={\MR{1403936 (98e:13012)}},
}

\bib{VdB:loccoh}{article}{
   author={Van den Bergh, Michel},
   title={Local cohomology of modules of covariants},
   journal={Adv. Math.},
   volume={144},
   date={1999},
   number={2},
   pages={161--220},
   issn={0001-8708},
   review={\MR{1695237 (2000d:14051)}},
   doi={10.1006/aima.1998.1809},
}

\bib{walther}{article}{
   author={Walther, Uli},
   title={Algorithmic computation of local cohomology modules and the local
   cohomological dimension of algebraic varieties},
   note={Effective methods in algebraic geometry (Saint-Malo, 1998)},
   journal={J. Pure Appl. Algebra},
   volume={139},
   date={1999},
   number={1-3},
   pages={303--321},
   issn={0022-4049},
   review={\MR{1700548 (2000h:13012)}},
   doi={10.1016/S0022-4049(99)00016-X},
}

\bib{weyman}{book}{
   author={Weyman, Jerzy},
   title={Cohomology of vector bundles and syzygies},
   series={Cambridge Tracts in Mathematics},
   volume={149},
   publisher={Cambridge University Press},
   place={Cambridge},
   date={2003},
   pages={xiv+371},
   isbn={0-521-62197-6},
   review={\MR{1988690 (2004d:13020)}},
   doi={10.1017/CBO9780511546556},
}

\bib{witt}{article}{
   author={Witt, Emily E.},
   title={Local cohomology with support in ideals of maximal minors},
   journal={Adv. Math.},
   volume={231},
   date={2012},
   number={3-4},
   pages={1998--2012},
   issn={0001-8708},
   review={\MR{2964631}},
   doi={10.1016/j.aim.2012.07.001},
}

		\end{biblist}
	\end{bibdiv}

\end{document}